 \newtheorem{question}[theorem]{Question}
\title{ON REPRESENTATIONS OF THE MULTI-VIRTUAL BRAID GROUP $M_kVB_n$ AND THE MULTI-WELDED BRAID GROUP $M_kWB_n$}
\author{Vaibhav Keshari}
\address{Department of Mathematics\\
Indian Institute of Technology Ropar, Punjab,
India\\ vaibhav.23maz0022@iitrpr.ac.in}
\author{Mohamad N. Nasser}
\address{Department of Mathematics and Computer Science\\ 
         Beirut Arab University\\ m.nasser@bau.edu.lb}
\author{Madeti Prabhakar\footnote{Corresponding author}}
\address{Department of Mathematics\\
Indian Institute of Technology Ropar, Punjab,
India\\ prabhakar@iitrpr.ac.in}
\begin{document}
\maketitle
\begin{abstract}
    This paper classifies complex homogeneous $2$-local representations of the multiple virtual braid group $M_kVB_n$ into $\mathrm{GL}_n(\mathbb{C})$ for $n\geq3$ and $k >1$, showing that such representations fall into exactly $2^{k+1}+1$ distinct types, out of which except three all are unfaithful. In addition, this paper investigates complex homogeneous $2$-local representations of the multiple welded braid group $M_kWB_n$ into $\mathrm{GL}_n(\mathbb{C})$ for $n\geq3$ and $k >1$, identifying $3 \cdot 2^{k-1} +1$ representations. Moreover, the article includes a construction of a non-local representation of $M_2WB_3$ that extends the known LKB representation of the braid group on $3$ strands, namely $B_3$, making a path towards constructing non-local representations of $M_kWB_n$ in general.
\end{abstract}

\section{Introduction}
In 2004, L.~Kauffman~\cite{KauffmanLambropoulou2004} introduced the \emph{virtual braid group} $VB_n$, a natural generalization of the classical braid group $B_n$ that incorporates virtual crossings. This group is defined abstractly by the Artin generators $\sigma_1, \sigma_2, \ldots, \sigma_{n-1}$ (representing classical crossings) and $\rho_1, \rho_2, \ldots, \rho_{n-1}$ (representing virtual crossings), subject to a specific set of defining relations. Building upon virtual knot theory, \emph{welded knot theory} is defined by including an additional move known as the \emph{forbidden move} F1 (also called the \emph{over-forbidden move}). In this framework, L. Kauffman~\cite{LH2024} introduced also the \emph{multi-virtual braid group}, a further generalization of the virtual braid group that incorporates multiple families of virtual generators. These groups correspond to \emph{multi-virtual knot theory}, where various types of virtual crossings are permitted. The structure of the multi-virtual braid group is governed by \emph{detour moves}, which are allowed between certain types of crossings while explicitly forbidden between others. Furthermore, \emph{multi-welded knot theory} extends this setup by allowing the forbidden move F1 in the multi-virtual setting, leading to the definition of the \emph{multi-welded braid group}.

\vspace{0.1cm}

Understanding such generalized braid groups raises an important question: \textit{Are these groups linear?} In algebra, linearity of an algebraic structure refers to the existence of a faithful representation into a matrix group over a field. For braid groups, this property has played a crucial role in understanding their algebraic and geometric behavior. A landmark result in this context is the linearity of the classical braid group \( B_n \), established via Lawrence–Krammer–Bigelow (LKB) representation, which provides a faithful linear representation of $B_n$ into \( \text{GL}_\frac{n(n-1)}{2}(R) \) for $n\geq 2$ and a suitable ring \( R \) \cite{Big2001, Kram2002, Law1990}.
The irreducibility of a representation is another specialty in representation theory that aids in understanding any algebraic structure. Classifying irreducible representations of any group $G$ is an intriguing task, particularly for the braid group $B_n$ and its group and monoid extensions.

\vspace{0.1cm}

In~\cite{Mik2013}, Y. Mikhalchishina introduced the concept of \textit{local linear representations} of \( B_3 \) and further developed \textit{homogeneous $2$-local representations} of \( B_n \), for \( n \geq 3 \), highlighting the significance of locality in constructing matrix representations. Moreove, T. Mayassi \textit{et al.} generalized Mikhalchishina's work by classifying all \textit{homogeneous $3$-local representations} of $B_n$ for $n\geq 4$ \cite{Mayassi2025}. Additionally, M.~Nasser \textit{et al.}~\cite{MNas2025} determined {\it homogeneous $k$-local representations} of the flat virtual braid group $FVB_n$ for different degrees $k$. Building on this, M.~Nasser \textit{et al.}~\cite{Mnas2025} defined {\it homogeneous $2$-local representations} of the twisted virtual braid group \( TVB_n, n\geq 2 \), studying their faithfulness and irreducibility in some cases.

\vspace{0.1cm}

Motivated by these developments, it is natural to explore the algebraic structure of more generalized braid groups. In particular, the \textit{multi-virtual braid group} \( M_kVB_n \) generalizes the virtual braid group by introducing multiple types of virtual crossings, while the \textit{multi -welded braid group} \( M_kWB_n \) further includes the over-forbidden move (F1), extending the structure analogously to welded braid theory.

\vspace{0.1cm}

This paper aims to understand the structure of these generalized braid groups by constructing \textit{complex homogeneous $2$-local representations} of \( M_kVB_n \) and \( M_kWB_n \), and studying key representation-theoretic properties such as \textit{faithfulness} and \textit{irreducibility}. 
We begin in Section~2 by presenting the key definitions and propositions that form the foundation for the results developed throughout this paper. In Section~3, we investigate all complex homogeneous $2$-local representations of the multi-virtual braid group \( M_kVB_n \) and analyze their properties. Section~4 focuses on the complex homogeneous $2$-local representations of the multi-welded braid group \( M_kWB_n \). In Section~5, we construct a non-local representation of $M_2WB_3$ that extends the LKB representation of $B_3$ and we state some of its characteristics.

\section{Preliminary}
The braid group on $n$ strands $B_n$~\cite{artin1946}, $n\geq2$, is an abstract group defined by the Artin generators $\sigma_1, \sigma_2, \ldots, \sigma_{n-1}$ with the following defining relations.
\begin{align}
    \sigma_i\sigma_{i+1}\sigma_i & =\sigma_{i+1}\sigma_i \sigma_{i+1}, \hspace{1 cm} i=1,2,\ldots ,n-2, \\
 \sigma_i\sigma_j & =\sigma_j\sigma_i,\ \hspace{1.89 cm} |i-j|\geq2.
\end{align}
The generator $\sigma_i$ and its inverse $\sigma_i^{-1}$ are geometrically illustrated in Fig.~\ref{fig:cl2}.
\begin{figure}[h]
\centering
    \begin{subfigure}[c]{0.35\textwidth}
        \centering
     \includegraphics[width=0.8\textwidth]{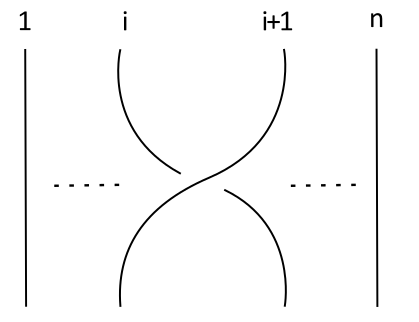}
     \caption{$\sigma_i$}
     \label{fig:cl1}
 \end{subfigure}
 ~
\begin{subfigure}[c]{0.35\textwidth}
            \centering
            \includegraphics[width=0.8\textwidth]{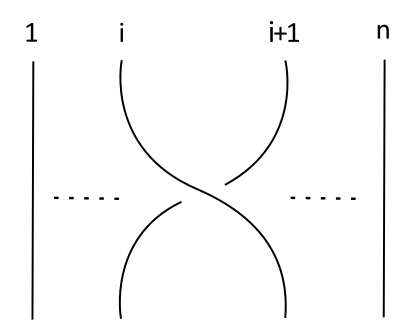}
         \caption{$\sigma^{-1}_i$} 
    \end{subfigure}
    \caption{~Geometrical interpretation of $\sigma_i$ and $\sigma^{-1}_i${}}
    \label{fig:cl2}
\end{figure}

The virtual braid group on $n$ strands $VB_n$ \cite{KauffmanLambropoulou2004}, $n\geq 2$, extends the classical braid group $B_n$ by adding generators $\rho_1,\rho_2,\dots, \rho_{n-1}$, along with the relations (2.1)-(2.2) and the additional defining relations as follows.
\begin{align}
    \rho^2_i &=e,  \hspace{2.47 cm}  i=1,2,\ldots,n-1, 
    \\\rho_i \rho_j &=\rho_j \rho_i, \hspace{1.95 cm}  |i-j|\geq 2,
  \\\rho_i \rho_{i+1} \rho_i &=\rho_{i+1} \rho_i \rho_{i+1}, \hspace{0.975 cm} i=1,2,\ldots, n-2, 
  \\\sigma_i \rho_j &=\rho_j \sigma_i, \hspace{1.93 cm} |i-j|\geq 2,
  \\\rho_i \rho_{i+1}\sigma_i &=\sigma_{i+1}\rho_i\rho_{i+1}, \hspace{0.95 cm}  i=1,2,\ldots, n-2.
\end{align}
The geometric interpretation of the identity $e$ and $\rho_i$ is shown in Fig.~\ref{fig:vir}.
\begin{figure}[h]
\centering
    \begin{subfigure}[c]{0.35\textwidth}
        \centering
     \includegraphics[width=0.9\textwidth]{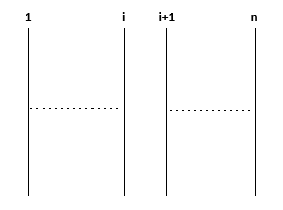}
     \caption{$e$}
     \label{fig:e}
 \end{subfigure}
 ~
\begin{subfigure}[c]{0.35\textwidth}
            \centering
            \includegraphics[width=0.8\textwidth]{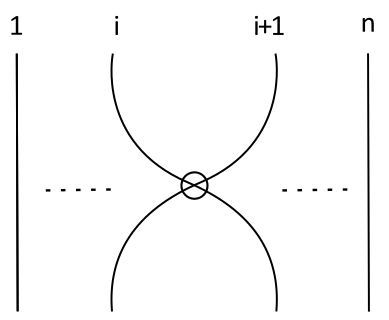}
         \caption{$\rho_i$} 
    \end{subfigure}
    \caption{~~Geometrical interpretation of identity e and $\rho_i$} 
    \label{fig:vir}
\end{figure}

The multi-virtual braid group on $n$ strands $M_kVB_n$~\cite{LH2024}, $n\geq 2$ and $k>1$, is an abstract group generated by $\sigma_1, \sigma_2, \dots, \sigma_{n-1}$ and $\rho_1^{\alpha}, \rho_2^{\alpha},\dots ,\rho_{n-1}^{\alpha}$ for $\alpha=0,1,\dots, k-1$, subject to a set of defining relations: 
\begin{align}
 \sigma_i\sigma_{i+1}\sigma_i & =\sigma_{i+1}\sigma_i \sigma_{i+1}, \hspace{1 cm} i=1,2,\ldots,n-2, \\
 \sigma_i\sigma_j & =\sigma_j\sigma_i,\ \hspace{1.9 cm} |i-j|\geq 2,\\
 (\rho_i^{\alpha})^2 & = e, \hspace{2.55cm} i=1,2,\ldots, n-1   \text{ and }  \alpha =0,1,\ldots,k-1,\\
 \rho_i^{\alpha} \rho_j^{\alpha} & = \rho_j^{\alpha} \rho_i^{\alpha},\hspace{1.9cm}   |i-j|\geq 2 \ \text{ and }  \alpha =0,1,\ldots, k-1,\\
 \rho_i^{\alpha} \rho_{i+1}^{\alpha} \rho_i^{\alpha} & = \rho_{i+1}^{\alpha} \rho_i^{\alpha} \rho_{i+1}^{\alpha}, \hspace{1cm} i=1,2,\ldots, n-1 \text{ and } \alpha =0,1,\ldots, k-1,\\
 \sigma_i \rho_j^{\alpha} & = \rho_j^{\alpha} \sigma_i, \hspace{1.98cm} |i-j|\geq2 \text{ and }  \alpha =0,1,\ldots,k-1,\\
 \rho_i^{\alpha} \rho_j^{\beta} & = \rho_j^{\beta} \rho_i^{\alpha}, \hspace{1.96cm} |i-j|\geq2 \text{ and } \alpha \neq \beta, \\
 \sigma_i \rho_{i+1}^{0} \rho_i^{0} & = \rho_{i+1}^{0} \rho_i^{0} \sigma_{i+1}, \hspace{1.05cm} i=1,2,\ldots,n-2,\\
 \rho_i^{0} \rho_{i+1}^{0} \rho_i^{\beta} & = \rho_{i+1}^{\beta} \rho_i^{0} \rho_{i+1}^{0}, \hspace{1.05cm} i=1,2,\ldots,n-2 \text{ and } \beta=1,2,\ldots, k-1. \vspace{0.1cm}
 \end{align}
 
 
 \noindent Note that for \( k = 1 \), \( M_kVB_n \) coincides with the virtual braid group \( VB_n \). When \( k = 2 \), \( M_kVB_n \) includes two distinct types of virtual crossings, as illustrated in Fig.~~\ref{fig:difvir}. In general, the generator \( \rho^\alpha_i \) is geometrically represented as shown in Fig.~~\ref{fig:mvir}. Varying \( \alpha \in \{0,1, \dots, k-1\} \) yields different types of virtual crossings.
 \begin{figure}[h]
\centering
    \begin{subfigure}[c]{0.35\textwidth}
        \centering
     \includegraphics[width=0.8\textwidth]{vir.png}
     \label{fig:vir0}
 \end{subfigure}
 ~
  \begin{subfigure}[c]{0.35\textwidth}
        \centering
     \includegraphics[width=0.8\textwidth]{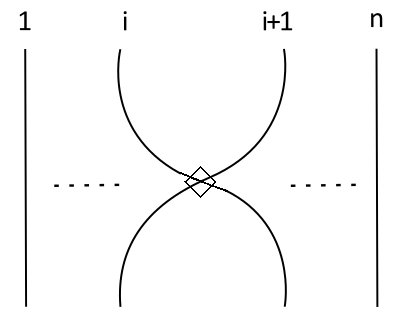}
     \label{fig:svir}
 \end{subfigure}
 \caption{Different type of Virtual crossing}
 \label{fig:difvir}
 \vspace{0.5cm}
 ~
\begin{subfigure}[c]{0.35\textwidth}
            \centering
            \includegraphics[width=0.8\textwidth]{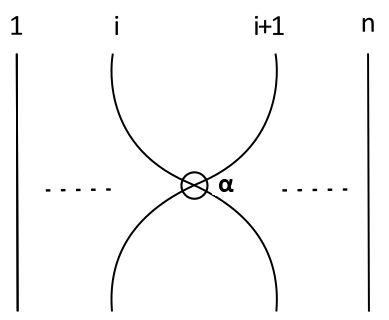}
         \caption{$\rho^{\alpha}_i$} 
    \end{subfigure}
    
    \caption{~~Geometrical interpretation of $\rho_i^{\alpha}$} 
    \label{fig:mvir}
\end{figure}\\
Remark that, for $1\leq i\leq n-2,$ the relations below are forbidden in $M_kVB_n$.
 \begin{align}
     F1:\hspace{0.1cm} \sigma_i \sigma_{i+1} \rho_i^{\alpha} & = \rho_{i+1}^{\alpha} \sigma_i \sigma_{i+1},  \hspace{0.86cm} 0 \leq \alpha \leq k-1,\\
     F2:\hspace{0.1cm} \sigma_{i+1} \sigma_i \rho_{i+1}^{\alpha} & = \rho_i^{\alpha} \sigma_{i+1} \sigma_i, \hspace{1.025cm} \hspace{0.1cm} 0 \leq \alpha \leq k-1,\\
      F3: \hspace{0.1cm} \rho_i^{(0)} \, \rho_{i+1}^{(\beta)} \, \rho_i^{(\beta)} &= \rho_{i+1}^{(\beta)} \, \rho_i^{(\beta)} \, \rho_{i+1}^{(0)}, \hspace{0.5cm}  0 < \beta \leq k-1, \\
\rho_i^{(\gamma)} \, \rho_{i+1}^{(\beta)} \, \rho_i^{(\beta)} &= \rho_{i+1}^{(\beta)} \, \rho_i^{(\beta)} \, \rho_{i+1}^{(\gamma)},  \hspace{0.5cm}  0 < \gamma < \beta \leq k-1, \\
\rho_i^{(\gamma)} \, \rho_{i+1}^{(\gamma)} \, \rho_i^{(\beta)} &= \rho_{i+1}^{(\beta)} \, \rho_i^{(\gamma)} \, \rho_{i+1}^{(\gamma)}, \hspace{0.5cm} 0 < \gamma < \beta \leq k-1.
 \end{align}
 
 The multi-welded braid group on $n$ strands $M_kWB_n$ \cite{up}, $n\geq 2$ and $k>1$, is the group obtained from $M_kVB_n$ by allowing the first forbidden move (F1) and the multi-unrestricted braid group $M_kUB_n$ \cite{up}, $n \geq 2$ and $k>1$, is the group obtained from $M_kVB_n$ by allowing the first and second forbidden moves (F1 and F2).




\vspace{0.1cm}
We now introduce the concept of $k$-local representations of any group with finite number of generators.

\begin{definition}
Let $t$ be indeterminate and let $G$ be a group with generators $g_1,g_2,\ldots,g_{n-1}$. A representation \[ \theta: G \rightarrow \text{GL}_{m}(\mathbb{Z}[t^{\pm 1}]) \] is said to be $k$-local if it is of the form
$$\theta(g_i) =\left( \begin{array}{c|@{}c|c@{}}
   \begin{matrix}
     I_{i-1} 
   \end{matrix} 
      & \textbf{0} & \textbf{0} \\
      \hline
    \textbf{0} &\hspace{0.2cm} \begin{matrix}
   		M_i
   		\end{matrix}  & \textbf{0}  \\
\hline
\textbf{0} & \textbf{0} & I_{n-i-1}
\end{array} \right), \hspace*{0.2cm} \text{for} \hspace*{0.2cm} 1\leq i\leq n-1,$$ 
where $M_i \in \text{GL}_k(\mathbb{Z}[t^{\pm 1}])$ with $k=m-n+2$ and $I_r$ is the $r\times r$ identity matrix. The representation $\theta$ is said to be homogeneous if the matrices \(M_i, 1 \leq i \leq n-1\), are equal.
\end{definition}

As important examples of homogeneous $k$-local representations with different degrees $k$, we consider the following two local representations of the braid group $B_n$, the \emph{Burau representation}, which is homogeneous $2$-local and the \emph{$F$-representation}, which is homogeneous $3$-local.

\begin{definition}\cite{Bur1936} \label{defBurau}
For $t$ indeterminate and $n\geq 2$, the Burau representation \[\mathcal{B}: B_n\rightarrow \text{GL}_n(\mathbb{Z}[t^{\pm 1}])\] is the representation given as follows:
$$\mathcal{B}(\sigma_i)= \left( \begin{array}{c|@{}c|c@{}}
   \begin{matrix}
     I_{i-1} 
   \end{matrix} 
      & \textbf{0} & \textbf{0} \\
      \hline
    \textbf{0} &\hspace{0.2cm} \begin{matrix}
   	1-t & t\\
   	1 & 0\\
\end{matrix}  & \textbf{0}  \\
\hline
\textbf{0} & \textbf{0} & I_{n-i-1}
\end{array} \right), \hspace*{0.2cm} \text{for} \hspace*{0.2cm} 1\leq i\leq n-1.$$ 
\end{definition}

\begin{definition} \cite{19} \label{Fdef}
For $t$ be indeterminate and $n\geq 2$, the $F$-representation $$\mathcal{F}: B_n \rightarrow \text{GL}_{n+1}(\mathbb{Z}[t^{\pm 1}])$$ is the representation given as follows:
$$\mathcal{F}(\sigma_i) = \left( \begin{array}{c|@{}c|c@{}}
   \begin{matrix}
     I_{i-1} 
   \end{matrix} 
      & 0 & 0 \\
      \hline
    0 &\hspace{0.2cm} \begin{matrix}
   		1 & 1 & 0 \\
   		0 &  -t & 0 \\   		
   		0 &  t & 1 \\
   		\end{matrix}  & 0  \\
\hline
0 & 0 & I_{n-i-1}
\end{array} \right), \hspace*{0.2cm} \text{for} \hspace*{0.2cm} 1\leq i\leq n-1.$$ 
\end{definition}

\vspace{0.1cm}

The faithfulness of the Burau representation varies with $n$. 
For $n\leq 3$, it is proven to be faithful \cite{Bir1975}, while for $n\geq 5$, it is shown to be unfaithful \cite{Moo1991, Long1992, Big1999}. The specific case of $n=4$ is still unresolved. On the other hand, the question of faithfulness for the $F$-representation is still open for $n\geq 2$. Furthermore, concerning the irreducibility of the Burau and the $F$-representations, they have been established as reducible for $n\geq 3$ (see \cite{For1996} and \cite{55} respectively).

\vspace{0.1cm}

The notion of $k$-local representations can be extended to a group $G$ generated by $k(n-1)$ elements, where the generating set is partitioned into $k$ distinct families, each consisting of $n-1$ generators. For simplicity, we illustrate the construction in the case $k=2$.

\begin{definition}
Let $t$ be indeterminate and let $G$ be a group with two families of generators $g_1,g_2,\ldots,g_{n-1}$ and $h_1,h_2,\ldots,h_{n-1}$. A $k$-local representation \[ \theta: G \rightarrow \text{GL}_{m}(\mathbb{Z}[t^{\pm 1}]) \] is a representation of the form
$$\theta(g_i) =\left( \begin{array}{c|@{}c|c@{}}
   \begin{matrix}
     I_{i-1} 
   \end{matrix} 
      & \textbf{0} & \textbf{0} \\
      \hline
    \textbf{0} &\hspace{0.2cm} \begin{matrix}
   		M_i
   		\end{matrix}  & \textbf{0}  \\
\hline
\textbf{0} & \textbf{0} & I_{n-i-1}
\end{array} \right),$$ \vspace{0.25cm}  \text{ and \ } 
$$\theta(h_i) =\left( \begin{array}{c|@{}c|c@{}}
   \begin{matrix}
     I_{i-1} 
   \end{matrix} 
      & \textbf{0} & \textbf{0} \\
      \hline
    \textbf{0} &\hspace{0.2cm} \begin{matrix}
   		N_i
   		\end{matrix}  & \textbf{0}  \\
\hline
\textbf{0} & \textbf{0} & I_{n-i-1}
\end{array} \right), \vspace{0.5cm} $$
for $1\leq i\leq n-1,$ where $M_i,N_i \in \text{GL}_k(\mathbb{Z}[t^{\pm 1}])$ with $k=m-n+2$ and $I_r$ is the $r\times r$ identity matrix. In this case, $\theta$ is homogeneous if all the matrices $M_i$'s are equal and all the matrices $N_i$'s are equal.
\end{definition}

In the definition that follows, we generalize the notion of $k$-local representations to the multi-virtual braid group $M_kVB_n$. Recall that $M_kVB_n$ is generated by $k+1$ distinct families, the classical braid generators $\{\sigma_1, \sigma_2, \dots, \sigma_{n-1}\}$ and the virtual generators \{$\rho_i^{\alpha}$ $| 1\leq i\leq n-1, 0 \leq \alpha \leq k-1$ \}. We use the dummy variable $k'$ instead of $k$ in next definition to distinguish between the variables.

\begin{definition}
    Let $t$ be indeterminate. A $k'$-local representation \[ \theta: M_kVB_n \rightarrow \text{GL}_{m}(\mathbb{Z}[t^{\pm 1}]) \] is a representation that is defined on the families of generators of $M_kVB_n$ as follows.
$$\theta(\sigma_i) =\left( \begin{array}{c|@{}c|c@{}}
   \begin{matrix}
     I_{i-1} 
   \end{matrix} 
      & \textbf{0} & \textbf{0} \\
      \hline
    \textbf{0} &\hspace{0.2cm} \begin{matrix}
   		S_i
   		\end{matrix}  & \textbf{0}  \\
\hline
\textbf{0} & \textbf{0} & I_{n-i-1}
\end{array} \right),$$ \text{ and  } 
$$\theta(\rho_i^{\alpha}) =\left( \begin{array}{c|@{}c|c@{}}
   \begin{matrix}
     I_{i-1} 
   \end{matrix} 
      & \textbf{0} & \textbf{0} \\
      \hline
    \textbf{0} &\hspace{0.2cm} \begin{matrix}
   		R_i^{\alpha}
   		\end{matrix}  & \textbf{0}  \\
\hline
\textbf{0} & \textbf{0} & I_{n-i-1}
\end{array} \right),$$ 
for $1\leq i\leq n-1$ and  $0\leq \alpha \leq k-1,$ where $S_i,R_i^{\alpha} \in \text{GL}_{k'}(\mathbb{Z}[t^{\pm 1}])$ with $k'=m-n+2$, and $I_r$ is the $r\times r$ identity matrix. In this case, $\theta$ is homogeneous if all the matrices $S_i$'s are equal and all the matrices $R_i^{\alpha}$'s are equal for each fixed $\alpha$,  $0\leq \alpha \leq k-1$.
\end{definition}

We now recall some known results concerning 2-local representations of the braid group \( B_n \) and their key properties. These results will be instrumental in analyzing the irreducibility of the representations constructed in Section~3.

\begin{theorem} \cite{Mik2013} \label{ThMi}
Consider $n\geq 3$ and let $\beta: B_n \rightarrow \text{GL}_n(\mathbb{C})$ be a non-trivial complex homogeneous $2$-local representation of $B_n$. Then, $\beta$ is equivalent to one of the following three representations.
\begin{itemize}
\item[(1)] $\beta_1: B_n \rightarrow \text{GL}_n(\mathbb{C}) \hspace*{0.15cm} \text{such that } 
\beta_1(\sigma_i) =\left( \begin{array}{c|@{}c|c@{}}
   \begin{matrix}
     I_{i-1} 
   \end{matrix} 
      & 0 & 0 \\
      \hline
    0 &\hspace{0.2cm} \begin{matrix}
   		a & \frac{1-a}{c}\\
   		c & 0
   		\end{matrix}  & 0  \\
\hline
0 & 0 & I_{n-i-1}
\end{array} \right), \\ \text{ where } c \neq 0, a\neq 1, \hspace*{0.15cm} \text{for} \hspace*{0.2cm} 1\leq i\leq n-1.$ \\
\item[(2)] $\beta_2: B_n \rightarrow \text{GL}_n(\mathbb{C}) \hspace*{0.15cm} \text{such that }
\beta_2(\sigma_i) =\left( \begin{array}{c|@{}c|c@{}}
   \begin{matrix}
     I_{i-1} 
   \end{matrix} 
      & 0 & 0 \\
      \hline
    0 &\hspace{0.2cm} \begin{matrix}
   		0 & \frac{1-d}{c}\\
   		c & d
   		\end{matrix}  & 0  \\
\hline
0 & 0 & I_{n-i-1}
\end{array} \right),\\ \text{ where } c\neq 0, d\neq 1 \hspace*{0.2cm} \text{for} \hspace*{0.2cm} 1\leq i\leq n-1.$\\
\item[(3)] $\beta_3: B_n \rightarrow \text{GL}_n(\mathbb{C}) \hspace*{0.15cm} \text{such that } 
\beta_3(\sigma_i) =\left( \begin{array}{c|@{}c|c@{}}
   \begin{matrix}
     I_{i-1} 
   \end{matrix} 
      & 0 & 0 \\
      \hline
    0 &\hspace{0.2cm} \begin{matrix}
   		0 & b\\
   		c & 0
   		\end{matrix}  & 0  \\
\hline
0 & 0 & I_{n-i-1}
\end{array} \right),\\ \text{ where } bc\neq 0 \hspace*{0.2cm} \text{for} \hspace*{0.2cm} 1\leq i\leq n-1.$
\end{itemize}
\end{theorem}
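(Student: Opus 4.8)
The plan is to exploit homogeneity and locality to collapse the defining relations of $B_n$ into a single $3\times 3$ matrix identity, and then to solve the resulting polynomial system by a short case analysis. First I would record that, by homogeneity, every $\beta(\sigma_i)$ carries the same $2\times 2$ block $M=\begin{pmatrix} a & b \\ c & d\end{pmatrix}$ in rows and columns $i,i+1$, with the identity elsewhere. The far-commutation relations $\sigma_i\sigma_j=\sigma_j\sigma_i$ for $|i-j|\geq 2$ then hold automatically, since the blocks of $\beta(\sigma_i)$ and $\beta(\sigma_j)$ occupy disjoint rows and columns and hence commute. Consequently the only genuine constraint is the braid relation $\beta(\sigma_i)\beta(\sigma_{i+1})\beta(\sigma_i)=\beta(\sigma_{i+1})\beta(\sigma_i)\beta(\sigma_{i+1})$, which (using $n\geq 3$, so that two adjacent generators exist) is supported on the $3\times 3$ block in rows and columns $i,i+1,i+2$ and is independent of $i$. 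Writing $A$ and $B$ for the two $3\times 3$ embeddings of $M$ into that block, the entire problem reduces to the single equation $ABA=BAB$.

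Next I would expand $ABA$ and $BAB$ and compare entries. The corner and several off-diagonal entries either agree identically or reproduce the same relation, and the independent scalar equations that survive are $a(a+bc-1)=0$, $\,abd=0$, $\,acd=0$, $\,ad(a-d)=0$, and $d(d+bc-1)=0$. To these I would adjoin the invertibility condition $\det M=ad-bc\neq 0$, which follows from $\beta(\sigma_i)\in\mathrm{GL}_n(\mathbb{C})$.

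Finally I would run a case analysis according to whether $a$ and $d$ vanish. If $a\neq 0$ and $d\neq 0$, the equations $abd=acd=0$ force $b=c=0$, and then the diagonal equations give $a=d=1$, so $M=I$ and $\beta$ is trivial, which is excluded by hypothesis. If $a\neq 0$ and $d=0$, the surviving relation $a+bc-1=0$ together with $\det M=-bc\neq 0$ yields $a=1-bc$ with $b,c\neq 0$; writing $b=(1-a)/c$ this is exactly the family $\beta_1$ (with $c\neq 0$, $a\neq 1$). The symmetric case $a=0$, $d\neq 0$ gives $d=1-bc$, $b=(1-d)/c$, which is $\beta_2$. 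Finally $a=d=0$ leaves only $\det M=-bc\neq 0$, i.e. $bc\neq 0$, which is $\beta_3$.

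The computation is mostly bookkeeping, so the step requiring the most care is the case analysis, and specifically the consistent use of the invertibility condition to discard the degenerate subcases: ensuring $c\neq 0$ so that the parametrizations $b=(1-a)/c$ and $b=(1-d)/c$ are legitimate, and confirming that the only representation eliminated is the trivial one. I would emphasize that no conjugation is actually needed here, since the prescribed block form pins down $M$ uniquely from $\beta$; thus $\beta$ literally coincides with one of the three families $\beta_1,\beta_2,\beta_3$ for suitable values of the parameters, which is the asserted classification.
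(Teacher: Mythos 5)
Your argument is correct and is essentially the standard one: the paper itself states this theorem without proof (citing Mikhalchishina), but the identical reduction — far commutations are automatic, the braid relation collapses to a single $3\times 3$ identity $ABA=BAB$, yielding the system $a(a+bc-1)=0$, $abd=0$, $acd=0$, $ad(a-d)=0$, $d(d+bc-1)=0$ — is exactly what appears as equations (3.16)--(3.22) in the paper's proof of Theorem 3.1, and your case analysis on the vanishing of $a$ and $d$ together with $\det M=ad-bc\neq 0$ recovers precisely the three families $\beta_1,\beta_2,\beta_3$ (the subcase $a=d=0$ overlapping with $a=0$ in $\beta_1$ being harmlessly absorbed into $\beta_3$).
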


\begin{theorem} \cite{M.Ch} \label{ThCh1}
The homogeneous $2$-local representations of the form $\beta_1$ and $\beta_2$ defined in Theorem \ref{ThMi} are reducible for $n\geq 6$.
\end{theorem}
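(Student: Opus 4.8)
The plan is to show reducibility of $\beta_1$ and $\beta_2$ by exhibiting an explicit proper nonzero invariant subspace of $\mathbb{C}^n$ that is stable under every $\beta_j(\sigma_i)$, $1\leq i\leq n-1$. Since the representation is homogeneous and $2$-local, each $\beta_j(\sigma_i)$ acts as the identity outside the two coordinates $i,i+1$ and acts by a fixed $2\times 2$ matrix $M=\left(\begin{smallmatrix} a & \frac{1-a}{c}\\ c & 0\end{smallmatrix}\right)$ (for $\beta_1$) or $M=\left(\begin{smallmatrix} 0 & \frac{1-d}{c}\\ c & d\end{smallmatrix}\right)$ (for $\beta_2$) on coordinates $(i,i+1)$. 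The first thing I would do is record how each generator transforms the standard basis vector $e_j$: for a fixed $i$, the vectors $e_j$ with $j\neq i,i+1$ are fixed, while $e_i$ and $e_{i+1}$ are sent to explicit linear combinations of $e_i,e_{i+1}$ dictated by the columns of $M$. This reduces the whole problem to a pattern-matching exercise across the $n-1$ generators.

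\medskip

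Next I would search for an invariant subspace of the form $W=\{x\in\mathbb{C}^n : \sum_{j=1}^n \lambda_j x_j = 0\}$ for a suitable fixed functional $(\lambda_1,\dots,\lambda_n)$, i.e. a common eigen-hyperplane for the transposes $\beta_j(\sigma_i)^{T}$. Equivalently, I would look for a single row vector $v$ that is a simultaneous left eigenvector (or more generally a common left-invariant line) for all the $\beta_j(\sigma_i)$. Because each $M$ in cases $\beta_1,\beta_2$ has $\det M = a-1\neq 0$ (resp.\ $\det M = d-1\neq 0$) together with a prescribed trace, the $2\times 2$ block has eigenvalues $1$ and $a-1$ (resp.\ $1$ and $d-1$); the presence of the eigenvalue $1$ is exactly what should let the off-diagonal overlaps between consecutive generators be reconciled. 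Concretely, I expect the vector with constant entries, or a geometric-progression vector $v=(1,r,r^2,\dots,r^{n-1})$ with $r$ chosen from the eigendata of $M$, to span a line fixed (up to scalar) by every generator, and its orthogonal complement then gives the invariant hyperplane $W$. The bound $n\geq 6$ strongly suggests that the argument produces an invariant subspace whose dimension or codimension only becomes a genuine \emph{proper} nonzero subspace once $n$ is large enough, so I would track dimensions carefully and verify $0 < \dim W < n$ precisely when $n\geq 6$.

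\medskip

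The main obstacle I anticipate is the consistency condition at overlapping indices: for adjacent generators $\sigma_i$ and $\sigma_{i+1}$ the blocks share the coordinate $i+1$, so a candidate invariant functional must simultaneously satisfy the eigen-relation coming from the $(i,i+1)$ block and the one coming from the $(i+1,i+2)$ block. This typically forces a recursion on the $\lambda_j$ (or on the entries of $v$), and the crux is to show this recursion admits a nontrivial solution whose support and rank give a subspace that is neither $\{0\}$ nor all of $\mathbb{C}^n$. I would set up this recursion explicitly from the two cases' matrix entries, solve it (expecting a linear two-term recurrence whose characteristic data is governed by $a$ or $d$), and then count the dimension of the resulting solution space. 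Once the invariant subspace is exhibited and shown to be proper and nonzero for $n\geq 6$, reducibility follows immediately by definition. I would finally confirm that the constraints $c\neq 0$, $a\neq 1$ (resp.\ $d\neq 1$) are used exactly to guarantee the subspace is well-defined and proper, so that the hypotheses of Theorem~\ref{ThMi} are fully accounted for.
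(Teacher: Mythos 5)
This theorem is quoted from Chreif--Dally \cite{M.Ch}; the paper itself gives no proof, so there is nothing internal to compare against except the closely analogous argument the authors use later for the $M_kVB_n$ representations, where they conjugate by $T=\mathrm{diag}(c^{1-n},\dots,c,1)$ and observe that $(1,1,\dots,1)^T$ is then a common fixed vector. Your plan is exactly that device and it works: for $\beta_1$ the block $\left(\begin{smallmatrix} a & \frac{1-a}{c}\\ c & 0\end{smallmatrix}\right)$ has characteristic polynomial $(\lambda-1)\bigl(\lambda-(a-1)\bigr)$ with eigenvalue-$1$ eigenvector $(1,c)^T$, the overlap condition you anticipate forces the two-term recursion $x_{i+1}=c\,x_i$, and hence $(1,c,c^2,\dots,c^{n-1})^T$ is fixed by every $\beta_1(\sigma_i)$ and spans an invariant line; for $\beta_2$ the same computation goes through with ratio $r=c/(1-d)$. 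The one miscalibration in your write-up is the expectation that tracking dimensions will make the bound $n\geq 6$ appear: the invariant line is a proper nonzero subspace for every $n\geq 2$, so this construction proves reducibility for all $n\geq 3$, which is strictly more than the statement; the restriction $n\geq 6$ is simply how the result is phrased in the cited source and plays no role in the argument. That is not a gap --- a stronger conclusion still yields the theorem --- but you should carry out the eigenvector computation rather than search for a threshold in $n$ that is not there.
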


\begin{theorem} \cite{M.Ch} \label{ThCh2}
The homogeneous $2$-local representations of the form $\beta_3$ defined in Theorem \ref{ThMi} are irreducible if and only if $bc\neq 1$, for $n\geq 3$.
\end{theorem}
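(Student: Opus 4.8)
The plan is to work with the explicit action of $A_i := \beta_3(\sigma_i)$ on the standard basis $\{e_1,\dots,e_n\}$ of $\mathbb{C}^n$, which reads $A_i e_i = c\,e_{i+1}$, $A_i e_{i+1} = b\,e_i$, and $A_i e_j = e_j$ for $j \notin \{i,i+1\}$, and to treat the two directions of the equivalence separately.

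For the easy direction, suppose $bc = 1$; here I would simply exhibit a $\beta_3$-invariant line. Setting $v = \sum_{j=1}^{n} c^{\,j-1} e_j$, a direct check gives $A_i v = v$ for every $i$: the only affected coordinates are those of $e_i$ and $e_{i+1}$, and after applying $A_i$ the coefficient of $e_i$ becomes $b\,c^{\,i} = (bc)c^{\,i-1} = c^{\,i-1}$ while that of $e_{i+1}$ becomes $c^{\,i-1}\cdot c = c^{\,i}$, both unchanged. Hence $\langle v\rangle$ is a nonzero proper (since $n\ge3$) invariant subspace, so $\beta_3$ is reducible.

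For the converse, assume $bc\neq1$ and let $W\neq 0$ be an invariant subspace; the goal is $W=\mathbb{C}^n$. I would first record a propagation lemma: if some $e_j\in W$ then $W=\mathbb{C}^n$, because $A_j e_j = c\,e_{j+1}$ and $A_{j-1}e_j = b\,e_{j-1}$ (with $b,c\neq0$) let one pass to both neighbours and thus reach every basis vector along the connected chain $1,\dots,n$. It therefore suffices to show that $W$ contains at least one basis vector. Starting from any nonzero $w=\sum_j \mu_j e_j\in W$, one computes $A_i w - w = (b\mu_{i+1}-\mu_i)e_i + (c\mu_i - \mu_{i+1})e_{i+1}$, a vector supported on the adjacent pair $\{i,i+1\}$ that vanishes only when $\mu_i=\mu_{i+1}=0$ (this is exactly where $bc\neq1$ enters, via $(1-bc)\mu_{i+1}=0$). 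Choosing $i$ so that $\{i,i+1\}$ meets the support of $w$ then produces a nonzero $x = \alpha e_i + \beta e_{i+1}\in W$.

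It remains to extract a single basis vector from $x$, and this is the crux of the argument. Since $n\ge3$, the index $i$ always has a neighbouring generator: taking $A_{i+1}$ when $i\le n-2$ (the case $i=n-1$ being symmetric through $A_{i-1}$), one finds $A_{i+1}x - x = \beta(c\,e_{i+2}-e_{i+1})$. If $\beta=0$ then $x$ is already a multiple of $e_i$; otherwise $u := c\,e_{i+2}-e_{i+1}\in W$, and the key identity $(A_{i+1}+I)u = (bc-1)e_{i+1}$ yields $e_{i+1}\in W$. Either way $W$ contains a basis vector, and the propagation lemma finishes the proof. The main obstacle is locating precisely this last combination: applying $A_i$ alone to an adjacent-support vector never leaves the plane $\langle e_i,e_{i+1}\rangle$ and tends to preserve the support size, so the collapse to a single basis vector genuinely requires a neighbouring generator together with the $(A_{i+1}+I)$-trick, and it is exactly the hypothesis $bc\neq1$ that makes the resulting scalar $bc-1$ nonzero.
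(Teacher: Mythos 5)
Your proof is correct. Note that the paper does not actually prove this statement --- it is quoted from Chreif--Dally \cite{M.Ch} as a known result --- so there is no internal proof to compare against; your argument supplies a complete, self-contained verification. Both directions check out: for $bc=1$ the vector $\sum_j c^{j-1}e_j$ is indeed fixed by every $\beta_3(\sigma_i)$, and for $bc\neq 1$ the chain of reductions (nonzero $w$ gives a vector supported on an adjacent pair via $A_iw-w$, the $(A_{i+1}+I)$-identity $(A_{i+1}+I)(c\,e_{i+2}-e_{i+1})=(bc-1)e_{i+1}$ collapses it to a single basis vector, and the propagation lemma then fills out all of $\mathbb{C}^n$) uses $b,c\neq 0$ and $bc\neq 1$ exactly where needed, with the case $i=n-1$ handled symmetrically. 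This is essentially the standard invariant-subspace analysis one expects for such $2$-local representations, so no gap to report.
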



It is natural to ask the following question: What are the possible forms of the complex homogeneous $2$-local representations of the multi-virtual braid group $M_kVB_n$ to $\text{GL}_n(\mathbb{C})$, and what are their key properties such as faithfulness and irreducibility?\\ In the next section, we  answer this question for $k=2$.

 \section{Complex Homogeneous Local Representation of $M_kVB_n$}

 In this section, we classify all complex homogeneous $2$-local representations of the multi-virtual braid group $M_kVB_n$ for $n\geq 3$ and $ k > 1$.

 \begin{theorem} \label{th3.1}
     Let $\beta: M_kVB_n \rightarrow \text{GL}_n(\mathbb{C})$ be a complex homogeneous $2$-local representation of $M_kVB_n$ for $n\geq 3$ and $ k > 1$. Then $\beta$ is equivalent to one of the following $2^{k+1}+1$ distinct representations. Among these, one is trivial representation in which $\beta$ sends each generator to the identity matrix, and the remaining $2^{k+1}$ representations are of the form:
     $$\beta(\sigma_i) =\left( \begin{array}{c|@{}c|c@{}}
   \begin{matrix}
     I_{i-1} 
   \end{matrix} 
      & \textbf{0} & \textbf{0} \\
      \hline
    \textbf{0} &\hspace{0.2cm} \begin{matrix}
   		S
   		\end{matrix}  & \textbf{0}  \\
\hline
\textbf{0} & \textbf{0} & I_{n-i-1}
\end{array} \right),$$ \vspace{0.25cm}
 and 
$$\beta(\rho_i^{\alpha}) =\left( \begin{array}{c|@{}c|c@{}}
   \begin{matrix}
     I_{i-1} 
   \end{matrix} 
      & \textbf{0} & \textbf{0} \\
      \hline
    \textbf{0} &\hspace{0.2cm} \begin{matrix}
   		X_{\alpha}
   		\end{matrix}  & \textbf{0}  \\
\hline
\textbf{0} & \textbf{0} & I_{n-i-1}
\end{array} \right), \vspace{0.25cm}$$
for $i=1,2,\dots,n-1,$ where \( S \) and \( X_{\alpha} \) are chosen from the following families of matrices:  
\[
S \in \left\{ 
\begin{pmatrix} 1 & 0 \\ 0 & 1 \end{pmatrix},\ 
\begin{pmatrix} 1 - bc & b \\ c & 0 \end{pmatrix},\ 
\begin{pmatrix} 0 & b \\ c & 0 \end{pmatrix},\ 
\begin{pmatrix} 0 & -\frac{d-1}{c} \\ c & d \end{pmatrix} 
\right\} \ \  \text{with } bc \neq 0,\ d \neq 1,
\]
and
\[
X_{\alpha} \in \left\{
\begin{pmatrix} 0 & x_{\alpha} \\ \frac{1}{x_{\alpha}} & 0 \end{pmatrix},\ 
\begin{pmatrix} 1 & 0 \\ 0 & 1 \end{pmatrix}
\right\} \ \  \text{for } 1 \leq \alpha \leq k-1\ \text{with } x_{\alpha} \neq 0.
\]
Additionally, for \(\alpha = 0\), we fix:
\[
X_0 = \begin{pmatrix} 0 & x_0 \\ \frac{1}{x_0} & 0 \end{pmatrix} \ \ \text{with } x_0 \neq 0.
\]
 \end{theorem}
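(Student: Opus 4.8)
The plan is to substitute the homogeneous $2$-local ansatz $\beta(\sigma_i)=(\text{block }S)$ and $\beta(\rho_i^\alpha)=(\text{block }X_\alpha)$ into each defining relation (2.8)--(2.16) and to solve the resulting matrix equations for the $2\times 2$ blocks $S$ and $X_0,\dots,X_{k-1}$. The first observation is that $2$-locality makes the ``distant'' relations (2.9), (2.11), (2.13), (2.14) automatic: whenever $|i-j|\ge 2$ the non-trivial blocks of $\beta(g_i)$ and $\beta(g_j)$ sit on disjoint coordinate pairs and hence commute. So the whole problem reduces to the overlapping ($3\times 3$) relations (2.8), (2.10), (2.12), (2.15), (2.16) acting on coordinates $i,i+1,i+2$.

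Next I would treat the families of relations in turn. From (2.10) each $X_\alpha$ is an involution, $X_\alpha^2=I_2$; combined with the braid relation (2.12) this pins $X_\alpha$ down. Using the standard fact that for involutions $s,t$ one has $sts=tst \iff (st)^3=1$, I would compute the $3\times 3$ product $X_1X_2$ of the local placements of a trace-zero involution $\begin{pmatrix}p & q\\ r & -p\end{pmatrix}$ and impose that its eigenvalues be cube roots of unity compatible with $\det=1$ and $\mathrm{tr}=-p^2$; since the $(3,1)$ and $(1,3)$ entries rule out the scalar (equal-eigenvalue) cases, the only surviving possibility is $p=0$, forcing $X_\alpha$ to be either $I_2$ or an anti-diagonal involution $\begin{pmatrix}0 & x_\alpha\\ 1/x_\alpha & 0\end{pmatrix}$. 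For the $\sigma$-block, relation (2.8) is exactly the braid relation analysed by Mikhalchishina, so by Theorem~\ref{ThMi} (adjoining the trivial solution $S=I_2$ excluded there) the block $S$ is, up to equivalence, one of the four matrices $I_2$, $\beta_1$, $\beta_2$, $\beta_3$ listed.

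The crux of the argument is the two mixed relations (2.15) and (2.16). A direct $3\times 3$ computation should show that when $X_0$ is the anti-diagonal involution, relation (2.15) holds identically for \emph{every} $S$, and relation (2.16) holds identically for \emph{every} choice $X_\beta\in\{I_2,\text{anti-diagonal}\}$; in particular these relations link neither $x_0$ to $S$ nor $x_0$ to $x_\beta$, which is why all four $S$-types and all $x_\alpha\neq 0$ persist. Conversely, I would show that $X_0=I_2$ collapses everything to the trivial representation: setting $X_0=I_2$ in (2.15) yields $\beta(\sigma_i)=\beta(\sigma_{i+1})$, hence $S=I_2$, and in (2.16) yields $\beta(\rho_i^\beta)=\beta(\rho_{i+1}^\beta)$, hence $X_\beta=I_2$ for all $\beta$. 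This is precisely what distinguishes $\rho^0$: in any non-trivial representation $X_0$ must be anti-diagonal, whereas each $X_\alpha$ with $\alpha\ge 1$ is free to be $I_2$ or anti-diagonal.

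Finally I would assemble the count and normalize. With $X_0$ anti-diagonal the free data are the block $S$ (four equivalence types) together with $X_1,\dots,X_{k-1}$ (two types each), giving $4\cdot 2^{k-1}=2^{k+1}$ non-trivial representations, plus the all-identity representation, for the claimed total $2^{k+1}+1$. The equivalences are realized by the geometric diagonal conjugations $P=\mathrm{diag}(1,\lambda,\dots,\lambda^{n-1})$, which preserve homogeneous $2$-locality, carry $S$ to the normal forms of Theorem~\ref{ThMi}, and keep each $X_\alpha$ of the form $\begin{pmatrix}0 & x\\ 1/x & 0\end{pmatrix}$ (conjugation scales the two anti-diagonal entries by reciprocal factors, preserving their product $1$). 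I expect the main obstacles to be (i) the explicit verification that (2.15) and (2.16) hold identically for the anti-diagonal $X_0$ --- the computation that genuinely explains why $\rho^0$ is singled out --- and (ii) confirming that the $2^{k+1}+1$ normal forms are pairwise inequivalent, so that the list is irredundant.
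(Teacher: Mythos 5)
Your proposal is correct and arrives at exactly the classification in the paper, but it is organized rather differently from the paper's own proof. The paper proceeds by brute force: it writes a generic $2\times 2$ block for each family of generators, expands the essential relations into an explicit system ($43$ equations in $12$ unknowns for $M_2VB_n$, then $61$ equations in $16$ unknowns for $M_3VB_n$), lists the nine (resp.\ seventeen) solutions, and then asserts that ``the analogous set of equations'' yields $2^{k+1}+1$ solutions for general $k$. You instead factor the problem into structural pieces: the distant relations are automatic by disjointness of blocks; the $\sigma$-block is handled by citing Mikhalchishina's Theorem~\ref{ThMi}; each $X_\alpha$ is pinned down by the involution-plus-braid argument (via $(st)^3=1$ and the trace/eigenvalue analysis, which correctly forces $X_\alpha\in\{I_2,\text{anti-diagonal}\}$); and, crucially, you isolate the mixed relations (2.15)--(2.16) and verify that they hold identically once $X_0$ is anti-diagonal, while $X_0=I_2$ collapses everything to the trivial representation. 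I checked these two computations and they are right, and they are what actually justifies the product count $4\cdot 2^{k-1}=2^{k+1}$ uniformly in $k$ --- something the paper only extrapolates from the cases $k=2,3$. So your route buys a genuine general-$k$ argument and an explanation of why $\rho^0$ is singled out, at the cost of relying on the quoted classification of the $\sigma$-block rather than rederiving it. The one point you flag but do not settle --- pairwise inequivalence of the resulting normal forms --- is likewise not addressed in the paper, so it is not a gap relative to the published argument, though it would be worth a sentence confirming that the diagonal conjugations you use cannot identify two families with different patterns of identity versus anti-diagonal blocks.
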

 \begin{proof}
     We first prove it for $M_2VB_n$, since $M_2VB_n$ has only three family of generators $\sigma_i$'s, $\rho_i^{0}$'s, and $ \rho_i^{1}$'s. Set 
     $$\beta(\sigma_i) =\left( \begin{array}{c|@{}c|c@{}}
   \begin{matrix}
     I_{i-1} 
   \end{matrix} 
      & \textbf{0} & \textbf{0} \\
      \hline
    \textbf{0} &\hspace{0.2cm} \begin{matrix}
\begin{matrix}
a & b \\
c & d
\end{matrix}

   		\end{matrix}  & \textbf{0}  \\
\hline
\textbf{0} & \textbf{0} & I_{n-i-1}
\end{array} \right),$$ 
$$\beta(\rho_i^{0}) =\left( \begin{array}{c|@{}c|c@{}}
   \begin{matrix}
     I_{i-1} 
   \end{matrix} 
      & \textbf{0} & \textbf{0} \\
      \hline
    \textbf{0} &\hspace{0.2cm} \begin{matrix}
   		\begin{matrix}
w & x \\
y & z
\end{matrix}
   		\end{matrix}  & \textbf{0}  \\
\hline
\textbf{0} & \textbf{0} & I_{n-i-1}
\end{array} \right),$$ \text{and} \
$$\beta(\rho_i^{1}) =\left( \begin{array}{c|@{}c|c@{}}
   \begin{matrix}
     I_{i-1} 
   \end{matrix} 
      & \textbf{0} & \textbf{0} \\
      \hline
    \textbf{0} &\hspace{0.2cm} \begin{matrix}
   		\begin{matrix}
p & q \\
r & s
\end{matrix}
   		\end{matrix}  & \textbf{0}  \\
\hline
\textbf{0} & \textbf{0} & I_{n-i-1}
\end{array} \right),$$ 
where $a,b,c,d,w,x,y,z,p,q,r,s \in \mathbb{C}$, $ad-bc\neq 0,~ wz-xy\neq 0, ~ps-qr\neq 0$. We only need to consider the following relations of the generators of $M_2VB_n$, and the rest of the relations imply similar equations.
\begin{align}
         (\rho_1^{0})^2 & = e,  \\
         (\rho_1^{1})^2 & = e,\\
         \sigma_1 \sigma_2 \sigma_1 & = \sigma_2 \sigma_1 \sigma_2,\\
         \rho_1^{0} \rho_2^{0} \rho_1^{0} & = \rho_2^{0} \rho_1^{0} \rho_2^{0},\\
         \rho_1^{1} \rho_2^{1} \rho_1^{1} & = \rho_2^{1} \rho_1^{1} \rho_2^{1},\\
         \sigma_1 \rho_2^{0} \rho_1^{0} & = \rho_2^{0} \rho_1^{0} \sigma_2,\\
         \rho_1^{0} \rho_2^{0} \rho_1^{1} & = \rho_2^{1} \rho_1^{0} \rho_2^{0}.
    \end{align}
To determine all complex homogeneous $2$-local representations of $M_2VB_n$, we substitute the generator images under the representation $\beta$ into the above relations of the group. This substitution produces a system of $43$ algebraic equations involving $12$ unknowns.
\begin{tcolorbox}[
  title=System of Equations Corresponding to Relations (3.1)--(3.7),
  colframe=black,
  colback=white,
  sharp corners,
  fonttitle=\bfseries,
  ]

\renewcommand{\arraystretch}{1.2}  
\setlength{\tabcolsep}{12pt}    

\begin{tabular}{p{0.47\textwidth}|p{0.47\textwidth}}

\parbox[t]{\linewidth}{
\rule{\linewidth}{0.4pt}
\textbf{(i)} \quad \textbf{\boldmath$(\rho_1^{0})^2 = e$} \\
\rule{\linewidth}{0.4pt}
\vspace{-0.5\baselineskip}
\begin{align*}
w^2 + xy &= 1, && (3.8)\\
wx + xz &= 0, && (3.9)\\
wy + yz &= 0, && (3.10)\\
z^2 + xy &= 1, && (3.11)
\end{align*}
\rule{\linewidth}{0.4pt}

\textbf{(ii)} \quad \textbf{\boldmath$(\rho_1^{1})^2 = e$} \\
\rule{\linewidth}{0.4pt}
\vspace{-0.5\baselineskip}
\begin{align*}
p^2 + qr &= 1, && (3.12)\\
pq + qs &= 0, && (3.13)\\
pr + rs &= 0, && (3.14)\\
s^2 + qr &= 1, && (3.15)
\end{align*}
\rule{\linewidth}{0.4pt}

\textbf{(iii)} \quad \textbf{\boldmath$\sigma_1 \sigma_2 \sigma_1 = \sigma_2 \sigma_1 \sigma_2$} \\
\rule{\linewidth}{0.4pt}
\vspace{-0.5\baselineskip}
\begin{align*}
a^2 + bca &= a, && (3.16)\\
ab + abd &= ab, && (3.17)\\
ac + acd &= ca, && (3.18)\\
ad^2 + bc &= da^2 + bc, && (3.19)\\
bd + abd &= bd, && (3.20)\\
cd + acd &= cd, && (3.21)\\
d^2 + bcd &= d, && (3.22)
\end{align*}
\rule{\linewidth}{0.4pt}

\textbf{(iv)} \quad \textbf{\boldmath$\rho_1^{0} \rho_2^{0} \rho_1^{0} = \rho_2^{0} \rho_1^{0} \rho_2^{0}$} \\
\rule{\linewidth}{0.4pt}
\vspace{-0.5\baselineskip}
\begin{align*}
w^2 + xyw &= w, && (3.23)\\
wx + wxz &= wx, && (3.24)\\
wy + wyz &= wy, && (3.25)\\
wz^2 + xy &= zw^2 + xy, && (3.26)\\
xz + wxz &= xz, && (3.27)\\
yz + wyz &= yz, && (3.28)\\
z^2 + xyz &= z, && (3.29)
\end{align*}
}
&
\parbox[t]{\linewidth}{
\rule{\linewidth}{0.4pt}
\textbf{(v)} \quad \textbf{\boldmath$\rho_1^{1} \rho_2^{1} \rho_1^{1} = \rho_2^{1} \rho_1^{1} \rho_2^{1}$} \\
\rule{\linewidth}{0.4pt}
\vspace{-0.5\baselineskip}
\begin{align*}
p^2 + qrp &= p, && (3.30)\\
pq + pqs &= pq, && (3.31)\\
pr + prs &= pr, && (3.32)\\
ps^2 + qr &= sp^2 + qr, && (3.33)\\
qs + pqs &= qs, && (3.34)\\
rs + prs &= rs, && (3.35)\\
s^2 + qrs &= s, && (3.36)
\end{align*}
\rule{\linewidth}{0.4pt}

\textbf{(vi)} \quad \textbf{\boldmath$\sigma_1 \rho_2^{0} \rho_1^{0} = \rho_2^{0} \rho_1^{0} \sigma_2$} \\
\rule{\linewidth}{0.4pt}
\vspace{-0.5\baselineskip}
\begin{align*}
aw + bwy &= w, && (3.37)\\
ax + bwz &= ax, && (3.38)\\
cw + dwy &= wy, && (3.39)\\
cx + dwz &= cx + awz, && (3.40)\\
dx + bwz &= dx, && (3.41)\\
cz + ayz &= yz, && (3.42)\\
dz + byz &= z, && (3.43)
\end{align*}
\rule{\linewidth}{0.4pt}

\textbf{(vii)} \quad \textbf{\boldmath$\rho_1^{0} \rho_2^{0} \rho_1^{1} = \rho_2^{1} \rho_1^{0} \rho_2^{0}$} \\
\rule{\linewidth}{0.4pt}
\vspace{-0.5\baselineskip}
\begin{align*}
pw + rwx &= w, && (3.44)\\
qw + swx &= wx, && (3.45)\\
py + rwz &= py, && (3.46)\\
qy + swz &= qy + pwz, && (3.47)\\
qz + pxz &= xz, && (3.48)\\
sy + rwz &= sy, && (3.49)\\
sz + rxz &= z. && (3.50)
\end{align*}
\rule{\linewidth}{0.4pt}
}
\end{tabular}
\end{tcolorbox}

Solving the above equations (3.8)-(3.50) implies nine different solutions that give nine different representations of $M_2VB_n$ and they are:

\begin{itemize}
    \item [(1)] $\beta_1: M_2VB_n \rightarrow \text{GL}_n(\mathbb{C})$ such that
    $$\beta_1(\sigma_i) =\left( \begin{array}{c|@{}c|c@{}}
   \begin{matrix}
     I_{i-1} 
   \end{matrix} 
      & \textbf{0} & \textbf{0} \\
      \hline
    \textbf{0} &\hspace{0.2cm} \begin{matrix}
\begin{matrix}
1 & 0 \\
0 & 1
\end{matrix}

   		\end{matrix}  & \textbf{0}  \\
\hline
\textbf{0} & \textbf{0} & I_{n-i-1}
\end{array} \right), \
\beta_1(\rho_i^{0}) =\left( \begin{array}{c|@{}c|c@{}}
   \begin{matrix}
     I_{i-1} 
   \end{matrix} 
      & \textbf{0} & \textbf{0} \\
      \hline
    \textbf{0} &\hspace{0.2cm} \begin{matrix}
   		\begin{matrix}
1 & 0 \\
0 & 1
\end{matrix}
   		\end{matrix}  & \textbf{0}  \\
\hline
\textbf{0} & \textbf{0} & I_{n-i-1}
\end{array} \right),$$
and
$$\beta_1(\rho_i^{1}) =\left( \begin{array}{c|@{}c|c@{}}
   \begin{matrix}
     I_{i-1} 
   \end{matrix} 
      & \textbf{0} & \textbf{0} \\
      \hline
    \textbf{0} &\hspace{0.2cm} \begin{matrix}
   		\begin{matrix}
1 & 0 \\
0 & 1
\end{matrix}
   		\end{matrix}  & \textbf{0}  \\
\hline
\textbf{0} & \textbf{0} & I_{n-i-1}
\end{array} \right).$$ 

\item [(2)] $\beta_2: M_2VB_n \rightarrow \text{GL}_n(\mathbb{C})$ such that
    $$\beta_2(\sigma_i) =\left( \begin{array}{c|@{}c|c@{}}
   \begin{matrix}
     I_{i-1} 
   \end{matrix} 
      & \textbf{0} & \textbf{0} \\
      \hline
    \textbf{0} &\hspace{0.2cm} \begin{matrix}
\begin{matrix}
1-bc & b\\
c & 0
\end{matrix}

   		\end{matrix}  & \textbf{0}  \\
\hline
\textbf{0} & \textbf{0} & I_{n-i-1}
\end{array} \right), \
\beta_2(\rho_i^{0}) =\left( \begin{array}{c|@{}c|c@{}}
   \begin{matrix}
     I_{i-1} 
   \end{matrix} 
      & \textbf{0} & \textbf{0} \\
      \hline
    \textbf{0} &\hspace{0.2cm} \begin{matrix}
   		\begin{matrix}
0 & x \\
\frac{1}{x} & 0
\end{matrix}
   		\end{matrix}  & \textbf{0}  \\
\hline
\textbf{0} & \textbf{0} & I_{n-i-1}
\end{array} \right),$$
and
$$\beta_2(\rho_i^{1}) =\left( \begin{array}{c|@{}c|c@{}}
   \begin{matrix}
     I_{i-1} 
   \end{matrix} 
      & \textbf{0} & \textbf{0} \\
      \hline
    \textbf{0} &\hspace{0.2cm} \begin{matrix}
   		\begin{matrix}
1 & 0 \\
0 & 1
\end{matrix}
   		\end{matrix}  & \textbf{0}  \\
\hline
\textbf{0} & \textbf{0} & I_{n-i-1}
\end{array} \right),$$
where $b,c,x$ $\in \mathbb{C}$ , $bc\neq0$ and $x\neq0$.
   
\item [(3)] $\beta_3: M_2VB_n \rightarrow \text{GL}_n(\mathbb{C})$ such that
   $$\beta_3(\sigma_i) =\left( \begin{array}{c|@{}c|c@{}}
   \begin{matrix}
     I_{i-1} 
   \end{matrix} 
      & \textbf{0} & \textbf{0} \\
      \hline
    \textbf{0} &\hspace{0.2cm} \begin{matrix}
\begin{matrix}
0 & b\\
c & 0
\end{matrix}

   		\end{matrix}  & \textbf{0}  \\
\hline
\textbf{0} & \textbf{0} & I_{n-i-1}
\end{array} \right), \
\beta_3(\rho_i^{0}) =\left( \begin{array}{c|@{}c|c@{}}
   \begin{matrix}
     I_{i-1} 
   \end{matrix} 
      & \textbf{0} & \textbf{0} \\
      \hline
    \textbf{0} &\hspace{0.2cm} \begin{matrix}
   		\begin{matrix}
0 & x \\
\frac{1}{x} & 0
\end{matrix}
   		\end{matrix}  & \textbf{0}  \\
\hline
\textbf{0} & \textbf{0} & I_{n-i-1}
\end{array} \right),$$
and
$$\beta_3(\rho_i^{1}) =\left( \begin{array}{c|@{}c|c@{}}
   \begin{matrix}
     I_{i-1} 
   \end{matrix} 
      & \textbf{0} & \textbf{0} \\
      \hline
    \textbf{0} &\hspace{0.2cm} \begin{matrix}
   		\begin{matrix}
1 & 0 \\
0 & 1
\end{matrix}
   		\end{matrix}  & \textbf{0}  \\
\hline
\textbf{0} & \textbf{0} & I_{n-i-1}
\end{array} \right),$$
where $b,c,x$ $\in \mathbb{C}$ , $bc\neq0$ and $x\neq0$.

\item [(4)]  $\beta_4: M_2VB_n \rightarrow \text{GL}_n(\mathbb{C})$ such that
$$\beta_4(\sigma_i) =\left( \begin{array}{c|@{}c|c@{}}
   \begin{matrix}
     I_{i-1} 
   \end{matrix} 
      & \textbf{0} & \textbf{0} \\
      \hline
    \textbf{0} &\hspace{0.2cm} \begin{matrix}
\begin{matrix}
1 & 0\\
0 & 1
\end{matrix}
   		\end{matrix}  & \textbf{0}  \\
\hline
\textbf{0} & \textbf{0} & I_{n-i-1}
\end{array} \right), \
\beta_4(\rho_i^{0}) =\left( \begin{array}{c|@{}c|c@{}}
   \begin{matrix}
     I_{i-1} 
   \end{matrix} 
      & \textbf{0} & \textbf{0} \\
      \hline
    \textbf{0} &\hspace{0.2cm} \begin{matrix}
   		\begin{matrix}
0 & x \\
\frac{1}{x} & 0
\end{matrix}
   		\end{matrix}  & \textbf{0}  \\
\hline
\textbf{0} & \textbf{0} & I_{n-i-1}
\end{array} \right),$$
and
$$\beta_4(\rho_i^{1}) =\left( \begin{array}{c|@{}c|c@{}}
   \begin{matrix}
     I_{i-1} 
   \end{matrix} 
      & \textbf{0} & \textbf{0} \\
      \hline
    \textbf{0} &\hspace{0.2cm} \begin{matrix}
   		\begin{matrix}
1 & 0 \\
0 & 1
\end{matrix}
   		\end{matrix}  & \textbf{0}  \\
\hline
\textbf{0} & \textbf{0} & I_{n-i-1}
\end{array} \right),$$
where $x$ $\in \mathbb{C}$ and $x\neq0$.
   
\item[(5)] $\beta_5: M_2VB_n \rightarrow \text{GL}_n(\mathbb{C})$ such that
   $$\beta_5(\sigma_i) =\left( \begin{array}{c|@{}c|c@{}}
   \begin{matrix}
     I_{i-1} 
   \end{matrix} 
      & \textbf{0} & \textbf{0} \\
      \hline
    \textbf{0} &\hspace{0.2cm} \begin{matrix}
\begin{matrix}
0 & \frac{1-d}{c}\\
c & d
\end{matrix}

   		\end{matrix}  & \textbf{0}  \\
\hline
\textbf{0} & \textbf{0} & I_{n-i-1}
\end{array} \right),\
\beta_5(\rho_i^{0}) =\left( \begin{array}{c|@{}c|c@{}}
   \begin{matrix}
     I_{i-1} 
   \end{matrix} 
      & \textbf{0} & \textbf{0} \\
      \hline
    \textbf{0} &\hspace{0.2cm} \begin{matrix}
   		\begin{matrix}
0 & x \\
\frac{1}{x} & 0
\end{matrix}
   		\end{matrix}  & \textbf{0}  \\
\hline
\textbf{0} & \textbf{0} & I_{n-i-1}
\end{array} \right),$$
and
$$\beta_5(\rho_i^{1}) =\left( \begin{array}{c|@{}c|c@{}}
   \begin{matrix}
     I_{i-1} 
   \end{matrix} 
      & \textbf{0} & \textbf{0} \\
      \hline
    \textbf{0} &\hspace{0.2cm} \begin{matrix}
   		\begin{matrix}
1 & 0 \\
0 & 1
\end{matrix}
   		\end{matrix}  & \textbf{0}  \\
\hline
\textbf{0} & \textbf{0} & I_{n-i-1}
\end{array} \right),$$
where $c,d,x$ $\in \mathbb{C}$, $d\neq1$, $c\neq 0$ and $x\neq0$.

\item [(6)] $\beta_6: M_2VB_n \rightarrow \text{GL}_n(\mathbb{C})$ such that
 $$\beta_6(\sigma_i) =\left( \begin{array}{c|@{}c|c@{}}
   \begin{matrix}
     I_{i-1} 
   \end{matrix} 
      & \textbf{0} & \textbf{0} \\
      \hline
    \textbf{0} &\hspace{0.2cm} \begin{matrix}
\begin{matrix}
1-bc & b\\
c & 0
\end{matrix}

   		\end{matrix}  & \textbf{0}  \\
\hline
\textbf{0} & \textbf{0} & I_{n-i-1}
\end{array} \right),\
\beta_6(\rho_i^{0}) =\left( \begin{array}{c|@{}c|c@{}}
   \begin{matrix}
     I_{i-1} 
   \end{matrix} 
      & \textbf{0} & \textbf{0} \\
      \hline
    \textbf{0} &\hspace{0.2cm} \begin{matrix}
   		\begin{matrix}
0 & x \\
\frac{1}{x} & 0
\end{matrix}
   		\end{matrix}  & \textbf{0}  \\
\hline
\textbf{0} & \textbf{0} & I_{n-i-1}
\end{array} \right),$$
and
$$\beta_6(\rho_i^{1}) =\left( \begin{array}{c|@{}c|c@{}}
   \begin{matrix}
     I_{i-1} 
   \end{matrix} 
      & \textbf{0} & \textbf{0} \\
      \hline
    \textbf{0} &\hspace{0.2cm} \begin{matrix}
   		\begin{matrix}
0 & q \\
\frac{1}{q} & 0 
\end{matrix}
   		\end{matrix}  & \textbf{0}  \\
\hline
\textbf{0} & \textbf{0} & I_{n-i-1}
\end{array} \right),$$
where $b,c,x$ $\in \mathbb{C}$, $bc\neq0$, $x\neq0$ and $q\neq0$.

\item [(7)] $\beta_7: M_2VB_n \rightarrow \text{GL}_n(\mathbb{C})$ such that
$$\beta_7(\sigma_i) =\left( \begin{array}{c|@{}c|c@{}}
   \begin{matrix}
     I_{i-1} 
   \end{matrix} 
      & \textbf{0} & \textbf{0} \\
      \hline
    \textbf{0} &\hspace{0.2cm} \begin{matrix}
\begin{matrix}
0 & b\\
c & 0
\end{matrix}

   		\end{matrix}  & \textbf{0}  \\
\hline
\textbf{0} & \textbf{0} & I_{n-i-1}
\end{array} \right),\
\beta_7(\rho_i^{0}) =\left( \begin{array}{c|@{}c|c@{}}
   \begin{matrix}
     I_{i-1} 
   \end{matrix} 
      & \textbf{0} & \textbf{0} \\
      \hline
    \textbf{0} &\hspace{0.2cm} \begin{matrix}
   		\begin{matrix}
0 & x \\
\frac{1}{x} & 0
\end{matrix}
   		\end{matrix}  & \textbf{0}  \\
\hline
\textbf{0} & \textbf{0} & I_{n-i-1}
\end{array} \right),$$
and
$$\beta_7(\rho_i^{1}) =\left( \begin{array}{c|@{}c|c@{}}
   \begin{matrix}
     I_{i-1} 
   \end{matrix} 
      & \textbf{0} & \textbf{0} \\
      \hline
    \textbf{0} &\hspace{0.2cm} \begin{matrix}
   		\begin{matrix}
0 & q \\
\frac{1}{q} & 0
\end{matrix}
   		\end{matrix}  & \textbf{0}  \\
\hline
\textbf{0} & \textbf{0} & I_{n-i-1}
\end{array} \right),$$
where $b,c,x$ $\in \mathbb{C}$, $bc\neq0$, $x\neq0$ and $q\neq0$.

\item[(8)]  $\beta_8: M_2VB_n \rightarrow \text{GL}_n(\mathbb{C})$ such that
$$\beta_8(\sigma_i) =\left( \begin{array}{c|@{}c|c@{}}
   \begin{matrix}
     I_{i-1} 
   \end{matrix} 
      & \textbf{0} & \textbf{0} \\
      \hline
    \textbf{0} &\hspace{0.2cm} \begin{matrix}
\begin{matrix}
1 & 0\\
0 & 1
\end{matrix}

   		\end{matrix}  & \textbf{0}  \\
\hline
\textbf{0} & \textbf{0} & I_{n-i-1}
\end{array} \right),\
\beta_8(\rho_i^{0}) =\left( \begin{array}{c|@{}c|c@{}}
   \begin{matrix}
     I_{i-1} 
   \end{matrix} 
      & \textbf{0} & \textbf{0} \\
      \hline
    \textbf{0} &\hspace{0.2cm} \begin{matrix}
   		\begin{matrix}
0 & x \\
\frac{1}{x} & 0
\end{matrix}
   		\end{matrix}  & \textbf{0}  \\
\hline
\textbf{0} & \textbf{0} & I_{n-i-1}
\end{array} \right),$$
and
$$\beta_8(\rho_i^{1}) =\left( \begin{array}{c|@{}c|c@{}}
   \begin{matrix}
     I_{i-1} 
   \end{matrix} 
      & \textbf{0} & \textbf{0} \\
      \hline
    \textbf{0} &\hspace{0.2cm} \begin{matrix}
   		\begin{matrix}
0 & q \\
\frac{1}{q} & 0
\end{matrix}
   		\end{matrix}  & \textbf{0}  \\
\hline
\textbf{0} & \textbf{0} & I_{n-i-1}
\end{array} \right),$$
where $x$ $\in \mathbb{C}$, $x\neq0$ and $q\neq0$.
   
\item[(9)] $\beta_9: M_2VB_n \rightarrow \text{GL}_n(\mathbb{C})$ such that
 $$\beta_9(\sigma_i) =\left( \begin{array}{c|@{}c|c@{}}
   \begin{matrix}
     I_{i-1} 
   \end{matrix} 
      & \textbf{0} & \textbf{0} \\
      \hline
    \textbf{0} &\hspace{0.2cm} \begin{matrix}
\begin{matrix}
0 & \frac{1-d}{c}\\
c & d
\end{matrix}

   		\end{matrix}  & \textbf{0}  \\
\hline
\textbf{0} & \textbf{0} & I_{n-i-1}
\end{array} \right),\
\beta_9(\rho_i^{0}) =\left( \begin{array}{c|@{}c|c@{}}
   \begin{matrix}
     I_{i-1} 
   \end{matrix} 
      & \textbf{0} & \textbf{0} \\
      \hline
    \textbf{0} &\hspace{0.2cm} \begin{matrix}
   		\begin{matrix}
0 & x \\
\frac{1}{x} & 0
\end{matrix}
   		\end{matrix}  & \textbf{0}  \\
\hline
\textbf{0} & \textbf{0} & I_{n-i-1}
\end{array} \right),$$
and
$$\beta_9(\rho_i^{1}) =\left( \begin{array}{c|@{}c|c@{}}
   \begin{matrix}
     I_{i-1} 
   \end{matrix} 
      & \textbf{0} & \textbf{0} \\
      \hline
    \textbf{0} &\hspace{0.2cm} \begin{matrix}
   		\begin{matrix}
0 & q \\
\frac{1}{q} & 0
\end{matrix}
   		\end{matrix}  & \textbf{0}  \\
\hline
\textbf{0} & \textbf{0} & I_{n-i-1}
\end{array} \right),$$
where $c,d,x$ $\in \mathbb{C}$, $d\neq1$, $c\neq 0$, $x\neq0$ and $q\neq0$.
\end{itemize}

Now we work on $M_3VB_n$ which has four family of generators $\sigma_i$'s, $\rho_i^{0}$'s, $\rho_i^{1}$'s, and $ \rho_i^{2}$'s., Here also we only need to consider the following relations of generators of $M_3VB_n$, and rest other relations imply similar equations.
     \begin{align}
         (\rho_1^{0})^2 & = e,  \\
         (\rho_1^{1})^2 & = e,\\
         (\rho_1^{2})^2 & = e,\\
         \sigma_1 \sigma_2 \sigma_1 & = \sigma_2 \sigma_1 \sigma_2,\\
         \rho_1^{0} \rho_2^{0} \rho_1^{0} & = \rho_2^{0} \rho_1^{0} \rho_2^{0},\\
         \rho_1^{1} \rho_2^{1} \rho_1^{1} & = \rho_2^{1} \rho_1^{1} \rho_2^{1},\\
         \rho_1^{2} \rho_2^{2} \rho_1^{2} & = \rho_2^{2} \rho_1^{2} \rho_2^{2},\\
         \sigma_1 \rho_2^{0} \rho_1^{0} & = \rho_2^{0} \rho_1^{0} \sigma_2,\\
         \rho_1^{0} \rho_2^{0} \rho_1^{1} & = \rho_2^{1} \rho_1^{0} \rho_2^{0},\\
         \rho_1^{0} \rho_2^{0} \rho_1^{2} & = \rho_2^{2} \rho_1^{0} \rho_2^{0}.
    \end{align}

Similarly, to determine all complex homogeneous $2$-local representations of $M_3VB_n$, we substitute the generator images under the representation $\beta$ into the above relations of the group. This substitution produces a system of 61 algebraic equations involving 16 unknowns.
Solving this system of equations yields seventeen different solutions, each of which corresponds to a representation of the group  $M_3VB_n$.

Similarly, for the general case of $M_kVB_n$ by solving the analogous set of equations we obtain all  $2^{k+1}+1$ representation of $M_kVB_n$.
\end{proof}

Now we study the faithfulness and the irreducibility of the above representations in some cases.

\begin{remark}\label{rem:1}
Note that among all the \( 2^{k+1} + 1 \) representations, only the following three are non-trivial in the sense that no entire family of generators is mapped directly to the identity element of \( \mathrm{GL}_n(\mathbb{C}) \).
 $$\beta(\sigma_i) =\left( \begin{array}{c|@{}c|c@{}}
   \begin{matrix}
     I_{i-1} 
   \end{matrix} 
      & \textbf{0} & \textbf{0} \\
      \hline
    \textbf{0} &\hspace{0.2cm} \begin{matrix}
   		S
   		\end{matrix}  & \textbf{0}  \\
\hline
\textbf{0} & \textbf{0} & I_{n-i-1}
\end{array} \right), \text{ and \  } 
\beta(\rho_i^{\alpha}) =\left( \begin{array}{c|@{}c|c@{}}
   \begin{matrix}
     I_{i-1} 
   \end{matrix} 
      & \textbf{0} & \textbf{0} \\
      \hline
    \textbf{0} &\hspace{0.2cm} \begin{matrix}
        \begin{matrix} 0 & x_{\alpha} \\ \frac{1}{x_{\alpha}} & 0 \end{matrix}
   		\end{matrix}  & \textbf{0}  \\
\hline
\textbf{0} & \textbf{0} & I_{n-i-1}
\end{array} \right),$$
for $i=1,2,\dots,n-1$ and $ 0 \leq \alpha \leq k-1$,\\
where\[
S \in \left\{ 
\begin{pmatrix} 1 - bc & b \\ c & 0 \end{pmatrix},\ 
\begin{pmatrix} 0 & b \\ c & 0 \end{pmatrix},\ 
\begin{pmatrix} 0 & -\frac{d-1}{c} \\ c & d \end{pmatrix} 
\right\}\  
\text{with } bc \neq 0,\ d \neq 1,
\] 
and $x_{\alpha} \neq 0.$
For example, in case of $M_2VB_n$ the representation \[ \beta_7: M_2VB_n \rightarrow \text{GL}_n(\mathbb{C}) \] defined on generators as
$$\beta_7(\sigma_i) =\left( \begin{array}{c|@{}c|c@{}}
   \begin{matrix}
     I_{i-1} 
   \end{matrix} 
      & \textbf{0} & \textbf{0} \\
      \hline
    \textbf{0} &\hspace{0.2cm} \begin{matrix}
\begin{matrix}
0 & b\\
c & 0
\end{matrix}

   		\end{matrix}  & \textbf{0}  \\
\hline
\textbf{0} & \textbf{0} & I_{n-i-1}
\end{array} \right), \
\beta_7(\rho_i^{0}) =\left( \begin{array}{c|@{}c|c@{}}
   \begin{matrix}
     I_{i-1} 
   \end{matrix} 
      & \textbf{0} & \textbf{0} \\
      \hline
    \textbf{0} &\hspace{0.2cm} \begin{matrix}
   		\begin{matrix}
0 & x \\
\frac{1}{x} & 0
\end{matrix}
   		\end{matrix}  & \textbf{0}  \\
\hline
\textbf{0} & \textbf{0} & I_{n-i-1}
\end{array} \right),$$
and
$$\beta_7(\rho_i^{1}) =\left( \begin{array}{c|@{}c|c@{}}
   \begin{matrix}
     I_{i-1} 
   \end{matrix} 
      & \textbf{0} & \textbf{0} \\
      \hline
    \textbf{0} &\hspace{0.2cm} \begin{matrix}
   		\begin{matrix}
0 & q \\
\frac{1}{q} & 0
\end{matrix}
   		\end{matrix}  & \textbf{0}  \\
\hline
\textbf{0} & \textbf{0} & I_{n-i-1}
\end{array} \right),$$
where $b,c,x,q \in \mathbb{C}$, $bc\neq0$, $x\neq0$ and $q\neq0$  is a representation such that the generators $\sigma_i, \rho_i^0 \text{ and } \rho_i^1$ are not directly mapped to identity.
 \end{remark}

 \begin{theorem}
     All complex homogeneous representations of $M_kVB_n$, except for the three representations mentioned in Remark~\ref{rem:1} (which may or may not be faithful), are unfaithful.
\end{theorem}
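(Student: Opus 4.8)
The plan is to show that every representation other than the three singled out in Remark~\ref{rem:1} collapses an \emph{entire} family of generators to the identity matrix, and then to argue that such a family always contains a non-trivial element of $M_kVB_n$, so that the kernel is non-trivial and the representation cannot be faithful.

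First I would record exactly which of the $2^{k+1}+1$ representations are excluded. By the classification in Theorem~\ref{th3.1}, a non-trivial representation is determined by the choice of $S$ (one of four matrices, one of which is $I_2$) together with the choices $X_\alpha$ from the two-element family of Theorem~\ref{th3.1} for $1 \le \alpha \le k-1$, the block $X_0 = \begin{pmatrix} 0 & x_0 \\ 1/x_0 & 0 \end{pmatrix}$ being fixed and non-identity. The three representations of Remark~\ref{rem:1} are precisely those for which $S \ne I_2$ and $X_\alpha \ne I_2$ for every $\alpha$; there are exactly three of them because there are three non-identity choices for $S$. Consequently a representation \emph{fails} to be one of these three precisely when either $S = I_2$, or $X_\alpha = I_2$ for some $1 \le \alpha \le k-1$, or it is the trivial representation.

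Next I would establish that the generators $\sigma_i$ and $\rho_i^\alpha$ are genuinely non-trivial elements of $M_kVB_n$. This is immediate from the three representations of Remark~\ref{rem:1}: each of them is a group homomorphism sending every $\sigma_i$ to the block matrix with $S \ne I_2$ and every $\rho_i^\alpha$ to $\begin{pmatrix} 0 & x_\alpha \\ 1/x_\alpha & 0 \end{pmatrix} \ne I_2$. Since a homomorphism cannot carry the identity to a non-identity element, it follows that $\sigma_i \ne e$ and $\rho_i^\alpha \ne e$ in $M_kVB_n$ for all $i$ and all $0 \le \alpha \le k-1$. The unfaithfulness then follows by a short case analysis along the trichotomy above. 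If $S = I_2$ then $\beta(\sigma_i) = I_n$ for every $i$, so the non-trivial element $\sigma_1 \in \ker\beta$ and $\beta$ is unfaithful. If $X_\alpha = I_2$ for some $1 \le \alpha \le k-1$, then $\beta(\rho_i^\alpha) = I_n$ for every $i$, so the non-trivial element $\rho_1^\alpha \in \ker\beta$ and again $\beta$ is unfaithful. Finally the trivial representation sends every generator to $I_n$ while $M_kVB_n$ is non-trivial, so it too is unfaithful. These cases exhaust all representations except the three of Remark~\ref{rem:1}, which completes the argument.

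The only delicate point is the non-triviality of the generators, and the main obstacle would arise if one attempted to prove this directly from the presentation, which would require control of the word problem in $M_kVB_n$ or an explicit embedding such as $B_n \hookrightarrow M_kVB_n$. I sidestep this entirely by using the already-constructed representations of Remark~\ref{rem:1} as witnesses, so that the non-triviality check becomes a one-line consequence of the classification rather than a structural result about the group.
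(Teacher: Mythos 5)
Your proposal is correct and follows essentially the same route as the paper: every representation outside the three of Remark~\ref{rem:1} sends an entire family of generators to the identity matrix, hence has non-trivial kernel. Your extra step of using the three representations of Remark~\ref{rem:1} as witnesses that the generators $\sigma_i$ and $\rho_i^{\alpha}$ are genuinely non-trivial elements of $M_kVB_n$ is a point the paper leaves implicit, and it is a worthwhile tightening of the argument.
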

\begin{proof}
    Let $\beta$ be any representation among the $2^{k+1} +1$ representation under consideration, excluding the three specified in the preceding remark. By the definition of these representation, for each such $\beta$ , there exists at least one nontrivial family of generators such as $\sigma_i$, $\rho_i^0$, or $\rho_i^1$, that  is mapped  entirely to the identity matrix in  $\text{GL}_n(\mathbb{C})$.
\end{proof}

The faithfulness of the representations mentioned in Remark~\ref{rem:1} is not guaranteed; however, they are known to be unfaithful under certain conditions, one of which is established in Theorem~3.4 below.

\begin{theorem}
     The three representations of $M_kVB_n$ highlighted in remark 3.2 are unfaithful under the following condition,
     \begin{itemize}
         \item[(i)]  When S= $\begin{pmatrix} 0 & b \\ c & 0 \end{pmatrix}$, the representation is unfaithful if, for any fixed $\alpha$, we have $x_{\alpha}= b \text{ and } \frac{1}{x_{\alpha}}= c$ .
         \item[(ii)]  When S=$\begin{pmatrix} 1 - bc & b \\ c & 0 \end{pmatrix} \text{ or }\  
\begin{pmatrix} 0 & -\frac{d-1}{c} \\ c & d \end{pmatrix}$, the representation is unfaithful if the values  $x_{\alpha}'s$ are equal for any two distinct indices $\alpha'$s.
     \end{itemize}
\end{theorem}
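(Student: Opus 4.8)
The plan is to prove unfaithfulness in each case by producing an explicit nontrivial element of $M_kVB_n$ that lies in $\ker\beta$. The leverage comes entirely from the homogeneous $2$-local shape: each generator is sent to a matrix that differs from $I_n$ only in the single $2\times 2$ diagonal block occupying rows and columns $i,i+1$, with $\beta(\sigma_i)$ carrying the block $S$ and $\beta(\rho_i^{\alpha})$ carrying the block $X_\alpha$. Thus whenever two generators are assigned the \emph{same} $2\times2$ block, their images coincide for every index $i$, and any product of one with the (group-)inverse of the other maps to $I_n$. The argument therefore splits into matching blocks under the stated hypotheses and then certifying that the resulting word is genuinely $\neq e$ in the group.

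For part (i), where $S=\begin{pmatrix} 0 & b \\ c & 0 \end{pmatrix}$, the hypothesis $x_\alpha=b$ and $\tfrac{1}{x_\alpha}=c$ (which forces $bc=1$) gives exactly $X_\alpha=S$, so $\beta(\sigma_i)=\beta(\rho_i^{\alpha})$ for all $i$. Since $\rho_i^{\alpha}$ is an involution, the word $\sigma_i\rho_i^{\alpha}$ satisfies $\beta(\sigma_i\rho_i^{\alpha})=S X_\alpha=S^2=I_n$. For part (ii), where $S$ has one of the two remaining listed forms, the hypothesis $x_\alpha=x_{\alpha'}$ for two distinct indices yields $X_\alpha=X_{\alpha'}$, hence $\beta(\rho_i^{\alpha})=\beta(\rho_i^{\alpha'})$ and the word $\rho_i^{\alpha}\rho_i^{\alpha'}$ maps to $X_\alpha X_{\alpha'}=X_\alpha^2=I_n$. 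In both parts the candidate kernel element is a product of two \emph{distinct} generators, and it remains only to show it is nonidentity in $M_kVB_n$.

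I would verify nontriviality through the abelianization $M_kVB_n^{\mathrm{ab}}$. Abelianizing the presentation, all $\sigma_i$ descend to a single class $\bar\sigma$ of infinite order (no defining relation imposes torsion on the classical generators), while each family $\rho_i^{\alpha}$ descends to a class $\bar\rho^{\,\alpha}$ of order $2$ coming from $(\rho_i^{\alpha})^2=e$; every mixed relation becomes an identity between equal sums once the ordering is forgotten, so it imposes no further identification. Consequently $M_kVB_n^{\mathrm{ab}}\cong\mathbb{Z}\oplus(\mathbb{Z}/2)^{k}$, with $\bar\sigma$ spanning the free part and $\bar\rho^{\,0},\dots,\bar\rho^{\,k-1}$ an independent basis of the torsion part. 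Under this quotient $\sigma_i\rho_i^{\alpha}\mapsto\bar\sigma+\bar\rho^{\,\alpha}\neq 0$ (its free component is $1$) and $\rho_i^{\alpha}\rho_i^{\alpha'}\mapsto\bar\rho^{\,\alpha}+\bar\rho^{\,\alpha'}\neq 0$ for $\alpha\neq\alpha'$ (a sum of two distinct basis vectors in $(\mathbb{Z}/2)^k$). Hence neither word is trivial, and $\beta$ has nontrivial kernel in every case.

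The main obstacle is precisely this last certification: the matrix identities by themselves leave open the possibility that the chosen word already equals $e$ in $M_kVB_n$, so one must exhibit a homomorphism on which it acts nontrivially. The abelianization is the natural choice, and its success hinges on confirming that the mixed relations of $M_kVB_n$ neither identify $\bar\sigma$ with any $\bar\rho^{\,\alpha}$ nor collapse the distinct virtual classes together; I would carry out that verification relation-by-relation, as it is the only point where the argument could break down.
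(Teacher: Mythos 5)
Your proposal takes essentially the same approach as the paper: under the stated conditions two distinct generators (resp.\ two distinct virtual generators) receive the same image, so the representation cannot be injective. The only difference is that you explicitly certify nontriviality of the kernel element via the abelianization $M_kVB_n^{\mathrm{ab}}\cong\mathbb{Z}\oplus(\mathbb{Z}/2)^{k}$ — a correct and worthwhile verification that the paper leaves implicit.
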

\begin{proof}
    For case $(i)$ the family $\sigma_i$'s and $\rho_i^{\alpha}$'s, for $i=1,2,\dots,n-1$ and any fixed $\alpha$, say $\alpha=0$ are mapped to same matrix, i.e., $$\beta(\sigma_i) =\left( \begin{array}{c|@{}c|c@{}}
   \begin{matrix}
     I_{i-1} 
   \end{matrix} 
      & \textbf{0} & \textbf{0} \\
      \hline
    \textbf{0} &\hspace{0.2cm} \begin{matrix}
   		\begin{matrix} 0 & b \\ c & 0 \end{matrix}
   		\end{matrix}  & \textbf{0}  \\
\hline
\textbf{0} & \textbf{0} & I_{n-i-1}
\end{array} \right), \text{ and } 
\beta(\rho_i^{0}) =\left( \begin{array}{c|@{}c|c@{}}
   \begin{matrix}
     I_{i-1} 
   \end{matrix} 
      & \textbf{0} & \textbf{0} \\
      \hline
    \textbf{0} &\hspace{0.2cm} \begin{matrix}
        \begin{matrix} 0 & b \\ c & 0 \end{matrix}
   		\end{matrix}  & \textbf{0}  \\
\hline
\textbf{0} & \textbf{0} & I_{n-i-1}
\end{array} \right),$$
for $i=1,2,\dots,n-1$. Hence, they are not faithful.\\ The proof of case $(ii)$  is similar to case $(i)$. 
    \end{proof}

In the following theorem, we examine the irreducibility of the representations of $M_kVB_n$ introduced in Theorem~\ref{th3.1}. Before proceeding, we present a lemma that will play a key role in the subsequent proof.
\begin{lemma} \label{llem}
  Consider $n \geq 2$ and let \( \theta : G \to \mathrm{GL}_n(V) \) be a representation of a group $G$ on a vector space $V$. Let $H$ be any subgroup of $G$. If \(\theta|_H : H \to \mathrm{GL}_n(V) \) is irreducible, then \( \theta : G \to \mathrm{GL}_n(V) \) is also irreducible.
\end{lemma}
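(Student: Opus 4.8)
The plan is to prove the contrapositive: rather than showing directly that irreducibility of $\theta|_H$ forces irreducibility of $\theta$, I would show that reducibility of $\theta$ forces reducibility of $\theta|_H$. These two statements are logically equivalent, and the contrapositive is the more natural direction to attack, since a reducible representation comes equipped with a concrete object---an invariant subspace---that one can manipulate directly, whereas irreducibility is a negative (``no such subspace exists'') condition.

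First I would recall the definition of reducibility being used: $\theta : G \to \mathrm{GL}_n(V)$ is reducible precisely when there exists a subspace $W \subseteq V$ with $W \neq \{0\}$ and $W \neq V$ that is invariant under the whole group, meaning $\theta(g)(W) \subseteq W$ for every $g \in G$. So I would begin by assuming $\theta$ is reducible and fixing such a proper nonzero $\theta(G)$-invariant subspace $W$.

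The key step is then the elementary observation that invariance under $G$ restricts to invariance under any subgroup. Since $H \leq G$, every element $h \in H$ is in particular an element of $G$, and hence the inclusion $\theta(g)(W) \subseteq W$ holds in particular for all $g = h \in H$. Therefore the same subspace $W$---still proper and nonzero---is invariant under $\theta|_H$, which exhibits $\theta|_H$ as reducible. Contraposing this implication yields exactly the statement of the lemma.

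I do not anticipate any genuine obstacle here: the result is immediate from the definition of irreducibility once one notes that the family of subspaces left invariant by $\theta(G)$ is contained in the family left invariant by $\theta(H)$. The only point requiring care is to pin down the precise definition of (ir)reducibility---namely the absence of proper nonzero invariant subspaces---so that the restriction of the invariance condition from $G$ to $H$ applies verbatim; no further hypotheses (such as finite dimensionality, semisimplicity, or a particular ground field) are needed for this direction.
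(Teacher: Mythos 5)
Your proposal is correct and coincides with the paper's own argument: the paper also assumes $\theta$ is reducible, takes a $\theta(G)$-invariant subspace $S$, and notes that $S$ remains invariant under $\theta(h)$ for all $h \in H \subseteq G$, contradicting irreducibility of $\theta|_H$. The only cosmetic difference is that the paper phrases this as a proof by contradiction rather than as a contrapositive.
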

\begin{proof}
     Suppose that \( \theta : G \to \mathrm{GL}_n(V) \) is reducible. Then, there exists an invariant subspace \( S \subseteq V \) such that \( \theta(g)s \in S \) for every \( s \in S \) and \( g \in G \). This implies that \( \theta(h)s \in S \) for every \( s \in S \) and \( h \in H \subseteq G \). Hence, \( S \) is an invariant subspace of \( V \) with respect to \( \theta|_H : H \to \mathrm{GL}_n(V) \). This implies that $\theta|_H$ is reducible, which is a contradiction.
\end{proof}

\begin{theorem}
Consider the $2^{k+1}+1$ representations of $M_kVB_n$ given in Theorem \ref{th3.1}. These representations have the form:
$$\beta(\sigma_i) =\left( \begin{array}{c|@{}c|c@{}}
   \begin{matrix}
     I_{i-1} 
   \end{matrix} 
      & \textbf{0} & \textbf{0} \\
      \hline
    \textbf{0} &\hspace{0.2cm} \begin{matrix}
   		S
   		\end{matrix}  & \textbf{0}  \\
\hline
\textbf{0} & \textbf{0} & I_{n-i-1}
\end{array} \right), \text{ and } 
\beta(\rho_i^{\alpha}) =\left( \begin{array}{c|@{}c|c@{}}
   \begin{matrix}
     I_{i-1} 
   \end{matrix} 
      & \textbf{0} & \textbf{0} \\
      \hline
    \textbf{0} &\hspace{0.2cm} \begin{matrix}
   		X_{\alpha}
   		\end{matrix}  & \textbf{0}  \\
\hline
\textbf{0} & \textbf{0} & I_{n-i-1}
\end{array} \right),$$
where \( S \) and \( X_{\alpha} \) are chosen from the following families of matrices:  
\[
S \in \left\{ 
\begin{pmatrix} 1 & 0 \\ 0 & 1 \end{pmatrix},\ 
\begin{pmatrix} 1 - bc & b \\ c & 0 \end{pmatrix},\ 
\begin{pmatrix} 0 & b \\ c & 0 \end{pmatrix},\ 
\begin{pmatrix} 0 & -\frac{d-1}{c} \\ c & d \end{pmatrix} 
\right\}\ \text{with } bc \neq 0,\ d \neq 1,
\]
\[
X_{\alpha} \in \left\{
\begin{pmatrix} 0 & x_{\alpha} \\ \frac{1}{x_{\alpha}} & 0 \end{pmatrix},\ 
\begin{pmatrix} 1 & 0 \\ 0 & 1 \end{pmatrix}
\right\},\ \text{for } 1 \leq \alpha \leq k-1\ \text{with } x_{\alpha} \neq 0. \]
 and
\[
 X_0=\begin{pmatrix} 0 & x_{0} \\ \frac{1}{x_{0}} & 0 \end{pmatrix} \text{ with } x_0\neq0.\]
Then, the following hold true.
\begin{itemize}
    \item[(a)] If $S=\begin{pmatrix} 1 & 0 \\ 0 & 1 \end{pmatrix},$ we consider the following two sub cases:
    \begin{itemize}
         \item[(i)] When $X_\alpha=\begin{pmatrix} 0 & x_{\alpha} \\ \frac{1}{x_{\alpha}} & 0 \end{pmatrix},$ then $\beta$ is reducible if $x_\alpha=1$ for all $0\leq \alpha \leq k-1$.
         \item[(ii)] When $X_\alpha=\begin{pmatrix} 1 & 0 \\ 0 & 1 \end{pmatrix}$, then $\beta$ is reducible if $x_0=1$.
    \end{itemize}
   \item[(b)] If $S=\begin{pmatrix} 1 - bc & b \\ c & 0 \end{pmatrix},$ we consider the following two sub cases:
    \begin{itemize}
        \item[(i)] When $X_\alpha=\begin{pmatrix} 0 & x_{\alpha} \\ \frac{1}{x_{\alpha}} & 0 \end{pmatrix},$ then $\beta$ is reducible if $cx_\alpha=1$ for all $0\leq \alpha \leq k-1$ and $n\geq 6$.
    \item[(ii)] When $X_\alpha=\begin{pmatrix} 1 & 0 \\ 0 & 1 \end{pmatrix}$, then $\beta$ is reducible if $cx_0=1$ and $n\geq 6$.
    \end{itemize}
    \item[(c)] If $S=\begin{pmatrix} 0 & b \\ c & 0 \end{pmatrix}$, we consider the following two sub cases:
    \begin{itemize}
        \item[(i)] When $X_\alpha=\begin{pmatrix} 0 & x_{\alpha} \\ \frac{1}{x_{\alpha}} & 0 \end{pmatrix}$, then:
    \begin{itemize}
        \item If $bc\neq 1$, then $\beta$ is irreducible.
        \item If $b=c=x_\alpha=1$ for all $0\leq \alpha \leq k-1$, then $\beta$ is reducible.
    \end{itemize}
    \item[(ii)] When $X_\alpha=\begin{pmatrix} 1 & 0 \\ 0 & 1 \end{pmatrix}$, then:
     \begin{itemize}
        \item If $bc\neq 1$, then $\beta$ is irreducible.
        \item If $b=c=x_0=1$, then $\beta$ is reducible.
        \end{itemize}
    \end{itemize}
    \item[(d)] If $S=\begin{pmatrix} 0 & -\frac{d-1}{c} \\ c & d \end{pmatrix}$, we consider the following two sub cases:
    \begin{itemize}
        \item[(i)] When $X_\alpha=\begin{pmatrix} 0 & x_{\alpha} \\ \frac{1}{x_{\alpha}} & 0 \end{pmatrix}$, then $\beta$ is reducible if $cx_\alpha=1$ for all $0\leq \alpha \leq k-1$ and $n\geq 6$.
        \item[(ii)] When $X=\begin{pmatrix} 1 & 0 \\ 0 & 1 \end{pmatrix}$, then $\beta$ is reducible if $cx_0=1$ and $n\geq 6$.
    \end{itemize}
\end{itemize}
\end{theorem}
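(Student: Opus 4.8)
The plan is to separate the two irreducibility claims from the reducibility claims. For irreducibility (the first bullet of (c)(i) and of (c)(ii)), I would restrict $\beta$ to the subgroup $H$ generated by $\sigma_1,\dots,\sigma_{n-1}$. With $S=\begin{pmatrix}0&b\\c&0\end{pmatrix}$ the matrices $\beta(\sigma_i)$ are precisely the blocks of the representation $\beta_3$ of Theorem \ref{ThMi}; by Theorem \ref{ThCh2} this is irreducible exactly when $bc\neq1$ (and $n\geq3$), so $\beta|_H$ is irreducible, and Lemma \ref{llem} lifts irreducibility to all of $M_kVB_n$, settling both irreducibility assertions at once.

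For every reducibility claim the device is a vector or covector fixed by all blocks, and the role of the hypothesis $cx_\alpha=1$ is purely linear-algebraic. Under $cx_\alpha=1$ the block $X_\alpha=\begin{pmatrix}0&x_\alpha\\1/x_\alpha&0\end{pmatrix}$ equals $\begin{pmatrix}0&1/c\\c&0\end{pmatrix}$, whose $+1$ right eigenvector is $(1,c)^{T}$ and whose $+1$ left eigenvector is $(c,1)$. A short computation shows the $\beta_1$-block $\begin{pmatrix}1-bc&b\\c&0\end{pmatrix}$ has the same right $+1$ eigenvector $(1,c)^{T}$, while the $\beta_2$-block $\begin{pmatrix}0&(1-d)/c\\c&d\end{pmatrix}$ has the same left $+1$ eigenvector $(c,1)$. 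Hence $cx_\alpha=1$ is exactly the compatibility forcing $S$ and each nontrivial $X_\alpha$ to share the relevant $+1$ (co)eigenvector.

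I would then chain these local data into global invariant subspaces. In case (b) the shared right eigenvector forces the fixed vector $v=(1,c,c^{2},\dots,c^{n-1})$, so $\langle v\rangle$ is invariant; in case (d) the shared left eigenvector forces the fixed covector $w=(1,c^{-1},\dots,c^{-(n-1)})$, so the hyperplane $\{v:\sum_j c^{-(j-1)}v_j=0\}$ is invariant; in case (a) and in the degenerate line of (c) every nontrivial generator becomes the transposition $\begin{pmatrix}0&1\\1&0\end{pmatrix}$ and $(1,\dots,1)$ spans an invariant line. Each case yields a proper nonzero invariant subspace, hence reducibility. This construction works for all $n\geq3$ and in particular gives the stated $n\geq6$ conclusions; alternatively one may quote Theorem \ref{ThCh1} for the braid part and then check that the $\rho_i^\alpha$ preserve the same subspace via the shared eigenvector.

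The main obstacle is case (d). Because the $\beta_2$-block has right $+1$ eigenvector $(1-d,c)^{T}$, which disagrees with the $X_\alpha$ eigenvector $(1,c)^{T}$ unless $d=0$, there is in general no one-dimensional invariant subspace, and one is forced to pass to the dual and use the left eigenvector $(c,1)$, obtaining an invariant hyperplane rather than a line. Spotting this $\beta_1$/$\beta_2$ duality, and checking the boundary cases where an entry of $v$ or $w$ might vanish, is the delicate point; the remaining steps are routine block multiplications.
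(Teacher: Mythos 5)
Your proposal is correct, and for the irreducibility claims in (c) and for cases (a) and (c)'s reducible sub-cases it coincides exactly with the paper's argument (restriction to the braid subgroup plus Theorem \ref{ThCh2} and Lemma \ref{llem}; the common fixed vector $(1,\dots,1)^T$). For case (b) the two arguments are the same up to a change of basis: the paper first conjugates by $T=\mathrm{diag}(c^{1-n},\dots,c^{-1},1)$ so that the fixed vector becomes $(1,\dots,1)^T$, whereas you exhibit the fixed vector $(1,c,\dots,c^{n-1})^T$ of the original blocks directly; these span the same invariant line. The genuine divergence is case (d). The paper disposes of it with ``the proof is similar to (b),'' but the literal analogue fails there: after the same diagonal conjugation the $\sigma$-block becomes $\begin{pmatrix}0&1-d\\1&d\end{pmatrix}$, which does not fix $(1,1)^T$ unless $d=0$, and indeed no common right eigenvector exists in general. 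What does work — and what the paper presumably intends — is the dual statement: $(1,1)$ is a common \emph{left} $+1$ eigenvector of all the blocks, so one gets an invariant hyperplane rather than an invariant line. You identify exactly this $\beta_1$/$\beta_2$ asymmetry and supply the covector $w=(1,c^{-1},\dots,c^{-(n-1)})$, so your write-up actually fills in the step the paper elides. Your remark that the constructions work for all $n\geq 3$ (the $n\geq 6$ in the statement being an artifact of Theorem \ref{ThCh1}) is also consistent with what the paper's own proof actually uses.
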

\begin{proof}
We consider each case separately.
\begin{itemize}
    \item[(a)]
    \begin{itemize}
        \item[(i)] In this case, we can see that the column vector $(1,1,\ldots,1)^T$ is invariant under $\beta(\sigma_i)$ and $\beta(\rho_i^\alpha)$ for all $1\leq i\leq n-1$ and $0\leq k \leq \alpha-1$. Thus, $\beta$ is reducible.
        \item[(ii)] The proof is similar to (a) first part.
    \end{itemize}
    \item[(b)] 
    \begin{itemize}
        \item[(i)] In this case, we consider an equivalent representation to $\beta.$ Consider the diagonal matrix $T=\text{diag}(c^{1-n},c^{2-n},\ldots, c,1)$, where $\text{diag}(t_1,t_2,\dots,t_n)$ is a diagonal $n\times n$ matrix with $t_{ii}=t_i$. Consider the equivalent representation $\beta'$ of $\beta$ given by: $\beta'(\sigma_i)=T^{-1}\beta(\sigma_i)T$ and $\beta'(\rho_i^\alpha)=T^{-1}\beta(\rho_i^\alpha)T$ for all $0\leq \alpha \leq k-1$ and $1\leq i \leq n-1$. Direct computations implies that 
$$\beta'(\sigma_i) =\left( \begin{array}{c|@{}c|c@{}}
   \begin{matrix}
     I_{i-1} 
   \end{matrix} 
      & 0 & 0 \\
      \hline
    0 &\hspace{0.2cm} \begin{matrix}
   		1-bc & bc\\
   		1 & 0
   		\end{matrix}  & 0  \\
\hline
0 & 0 & I_{n-i-1}
\end{array} \right), \text{ and } \beta'(\rho_i^\alpha) =\left( \begin{array}{c|@{}c|c@{}}
   \begin{matrix}
     I_{i-1} 
   \end{matrix} 
      & 0 & 0 \\
      \hline
    0 &\hspace{0.2cm} \begin{matrix}
   		 0 & cx_\alpha\\
   		\frac{1}{cx_\alpha} & 0
   		\end{matrix}  & 0  \\
\hline
0 & 0 & I_{n-i-1}
\end{array} \right),$$
$\text{for } 0\leq \alpha \leq k-1 \text{ and } 1\leq i\leq n-1$. Now, since $cx_\alpha=1$ for all $0\leq \alpha \leq k-1$, we can easily see that the column vector $(1,1,\ldots,1)^T$ is invariant under $\beta'(\sigma_i)$ and $\beta'(\rho_i^\alpha)$ for all $0\leq k \leq \alpha-1$ and $1\leq i\leq n-1$. Therefore, $\beta'$ is reducible, and so $\beta$ is reducible.
 \item[(ii)]  The proof is similar to (b) first part.
    \end{itemize}
 \item[(c)]
\begin{itemize}
    \item[(i)] In this case, we consider two sub cases:
    \begin{itemize}
        \item If $bc\neq 1$, then the restriction of $\beta$ to $B_n$ is irreducible by Theorem \ref{ThCh2}. Hence, $\beta$ is irreducible by Lemma \ref{llem}.
        \item If if $b=c=x_\alpha=1$ for all $0\leq \alpha \leq k-1$, then  we can see that the column vector $(1,1,\ldots,1)^T$ is invariant under $\beta(\sigma_i)$ and $\beta(\rho_i^\alpha)$ for all $0\leq k \leq \alpha-1$ and $1\leq i\leq n-1$, and so $\beta$ is reducible in this sub case.
   \end{itemize}
    \item[(ii)] The proof is similar to (c) first part.
    \end{itemize}
    \item[(d)] The proof is similar to (b) part.
    \end{itemize}
\end{proof}

\section{Complex Homogeneous Local Representations of $M_kWB_n$}

In this section, we aim to determine all the complex homogeneous $2$-local representations of the multi-welded braid group $M_kWB_n$ for $n\geq 3$ and $k > 1$.

\begin{theorem}
    Let $\zeta: M_kWB_n \rightarrow \text{GL}_{n}(\mathbb{C})$ be a complex homogeneous $2$-local representation of $M_kWB_n$. Then, $\zeta$ is equivalent to one of the following $3\cdot2^{k-1}+1$ representations. Among these, one is trivial representation in which $\zeta$ sends each generator to the identity matrix, and the remaining $ 3 \cdot 2^{k-1}$ representations are of the form:
     $$\zeta(\sigma_i) =\left( \begin{array}{c|@{}c|c@{}}
   \begin{matrix}
     I_{i-1} 
   \end{matrix} 
      & \textbf{0} & \textbf{0} \\
      \hline
    \textbf{0} &\hspace{0.2cm} \begin{matrix}
   		A
   		\end{matrix}  & \textbf{0}  \\
\hline
\textbf{0} & \textbf{0} & I_{n-i-1}
\end{array} \right), \ 
 \text{ and } \
\zeta(\rho_i^{\alpha}) =\left( \begin{array}{c|@{}c|c@{}}
   \begin{matrix}
     I_{i-1} 
   \end{matrix} 
      & \textbf{0} & \textbf{0} \\
      \hline
    \textbf{0} &\hspace{0.2cm} \begin{matrix}
   		Y_{\alpha}
   		\end{matrix}  & \textbf{0}  \\
\hline
\textbf{0} & \textbf{0} & I_{n-i-1}
\end{array} \right),$$

for $i=1,2, \dots, n-1$, where \( A \) and \( Y_{\alpha} \) are chosen from the following families of matrices:  
\[
A \in \left\{ 
\begin{pmatrix} 1 - bc & b \\ c & 0 \end{pmatrix},\ 
\begin{pmatrix} 0 & b \\ c & 0 \end{pmatrix},\ 
\begin{pmatrix}
0 & b \\
\frac{1 - d}{b} & d
\end{pmatrix} 
\right\} \ \ \text{with } bc \neq 0,\ d \neq 1,
\]
and
\[
Y_{\alpha} \in \left\{
\begin{pmatrix} 0 & x_{\alpha} \\ \frac{1}{x_{\alpha}} & 0 \end{pmatrix},\ 
\begin{pmatrix}
0 & b \\
\frac{1}{b} & 0
\end{pmatrix},\
\begin{pmatrix} 1 & 0 \\ 0 & 1 \end{pmatrix}
\right\} \ \ \text{for } 0 \leq \alpha \leq k-1,\ \text{with } x_{\alpha} \neq 0, ~ b \neq 0, \] and satisfy the conditions:
\begin{itemize}
    \item[(i)] If $A$= $\begin{pmatrix} 1 - bc & b \\ c & 0 \end{pmatrix}$\text{ or}\ 
$A=\begin{pmatrix}
0 & b \\
\frac{1 - d}{b} & d
\end{pmatrix}$,\  then the matrices $Y_{\alpha}$ is given by:
\begin{itemize}
    \item[(a)] $Y_{0}= \begin{pmatrix}
  0 & b \\
  \frac{1}{b} & 0
  \end{pmatrix}$, and 
  \item[(b)] $Y_{\alpha}$=$\begin{pmatrix}
  0 & b \\
  \frac{1}{b} & 0
  \end{pmatrix}$ or\ $Y_{\alpha}=\begin{pmatrix} 1 & 0 \\ 0 & 1 \end{pmatrix}$ for $1 \leq \alpha \leq k-1$.
\end{itemize}
  \item[(ii)] if $A$= $\begin{pmatrix} 0 & b \\ c & 0 \end{pmatrix}$, then the matrices $Y_{\alpha}$ is given by:
\begin{itemize}
\item[(a)]  $Y_0 = \begin{pmatrix} 0 & x_0 \\ \frac{1}{x_0} & 0 \end{pmatrix}$, and 
    \item[(b)] $Y_{\alpha}=\begin{pmatrix} 0 & x_{\alpha} \\ \frac{1}{x_{\alpha}} & 0 \end{pmatrix}$ or $Y_{\alpha}=\begin{pmatrix} 1 & 0 \\ 0 & 1 \end{pmatrix}$ for $1 \leq \alpha \leq k-1$.
    
\end{itemize}
\end{itemize}
\end{theorem}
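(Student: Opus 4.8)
The plan is to exploit the fact that $M_kWB_n$ is the quotient of $M_kVB_n$ obtained by imposing the first forbidden move F1. Consequently, if $\pi\colon M_kVB_n\twoheadrightarrow M_kWB_n$ denotes the natural surjection, then any homogeneous $2$-local representation $\zeta$ of $M_kWB_n$ composes to a homogeneous $2$-local representation $\zeta\circ\pi$ of $M_kVB_n$. By Theorem~\ref{th3.1}, this composite is equivalent to one of the $2^{k+1}+1$ standard forms, and since F1 is a group relation it is preserved under conjugation. I may therefore assume $\zeta$ is already presented in one of those standard forms and simply determine which ones continue to satisfy F1, together with any further constraints it imposes.

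First I would record the block form of F1. Writing $A=\begin{pmatrix}a&b\\c&d\end{pmatrix}$ for the $\sigma$-block and $Y_\alpha=\begin{pmatrix}w&x\\y&z\end{pmatrix}$ for the $\rho^\alpha$-block, the relation $\sigma_i\sigma_{i+1}\rho_i^\alpha=\rho_{i+1}^\alpha\sigma_i\sigma_{i+1}$ involves only strands $i,i+1,i+2$, so it reduces to a single $3\times 3$ matrix identity (nine scalar equations) for each $\alpha$. Substituting the four admissible shapes of $A$ from Theorem~\ref{th3.1}, the case $A=I$ is immediate: then $\sigma_i\sigma_{i+1}$ is the identity and F1 collapses to $\beta(\rho_i^\alpha)=\beta(\rho_{i+1}^\alpha)$ as $3\times 3$ matrices, which forces $Y_\alpha=I$ for every $\alpha$, including $\alpha=0$. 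Since Theorem~\ref{th3.1} requires $Y_0$ nontrivial, this family is eliminated save for the fully trivial representation, which is precisely what reduces the number of admissible $A$'s from four to three.

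For the three remaining shapes the computation separates cleanly. When $A=\begin{pmatrix}0&b\\c&0\end{pmatrix}$, I expect the $3\times 3$ F1 identity to hold verbatim for an arbitrary off-diagonal block $Y_\alpha=\begin{pmatrix}0&x_\alpha\\1/x_\alpha&0\end{pmatrix}$ as well as for $Y_\alpha=I$, with no equation tying $x_\alpha$ to the entries of $A$; this produces case (ii), with $Y_0$ a free off-diagonal block and each $Y_\alpha$ one of two options. By contrast, for $A=\begin{pmatrix}1-bc&b\\c&0\end{pmatrix}$ and for $A=\begin{pmatrix}0&b\\\frac{1-d}{b}&d\end{pmatrix}$, comparing the relevant entries of the two sides of F1 forces $x_\alpha=b$ whenever $Y_\alpha$ is off-diagonal, so the only admissible nontrivial virtual block becomes $\begin{pmatrix}0&b\\1/b&0\end{pmatrix}$ with the same $b$; since $Y_\alpha=I$ remains always admissible and $Y_0$ is forced nontrivial, this yields case (i). Counting then proceeds: the eliminated $A=I$ family contributes only the trivial representation, while each of the three surviving $A$-shapes admits $2^{k-1}$ choices for $(Y_1,\dots,Y_{k-1})$ (two options each, with $Y_0$ fixed nontrivial), for a total of $3\cdot 2^{k-1}+1$.

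The principal obstacle I anticipate is bookkeeping rather than conceptual: executing the $3\times 3$ products accurately for each shape of $A$ and confirming that the degenerate boundaries, namely $bc=1$ in the Burau-type block and $d=0$ in the third block, merely collapse into the off-diagonal family, so that no representation is double-counted or lost at the interfaces between the three cases. A secondary point requiring care is justifying that the nontriviality of $Y_0$ is genuinely inherited from the $M_kVB_n$ relations (in particular the mixed $\rho^0$-relations), since this is exactly what rules out $A=I$ and what pins $Y_0$ to an off-diagonal block in both cases (i) and (ii).
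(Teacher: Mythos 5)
Your proposal is correct and follows essentially the same route as the paper: the paper likewise obtains the $M_kWB_n$ classification by adjoining the F1 block equations (its (4.3)--(4.16) and their analogues) to the $M_kVB_n$ system already solved in Theorem~\ref{th3.1}, and your case analysis --- $A=I$ forcing triviality, the anti-diagonal $A$ leaving $x_\alpha$ free, and the other two shapes forcing $x_\alpha=b$ --- reproduces exactly the seven solutions for $k=2$ and the count $3\cdot 2^{k-1}+1$ in general. Your explicit framing via the quotient map $M_kVB_n\twoheadrightarrow M_kWB_n$ is just a cleaner organization of the same computation.
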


\begin{proof}
    Similar to Theorem 3.1, here we have some extra relations because of new relation F1 (over-forbidden move) which are: \[\sigma_i \sigma_{i+1} \rho_i^{\alpha} = \rho_{i+1}^{\alpha} \sigma_i \sigma_{i+1},  \hspace{0.875cm} i=1,2,\ldots ,n-2, \hspace{0.1cm} 0 \leq \alpha \leq k-1.\]
    In the case of \( M_2WB_n \), only the two additional relations (4.1)–(4.2) are added to the defining relations of \( M_2VB_n \) given in (3.1)–(3.8), resulting in fourteen new equations, namely (4.3)–(4.16).
\begin{align}
  \sigma_1 \sigma_2 \rho_1^{0} & =  \rho_2^{0} \sigma_1 \sigma_2\\
    \sigma_1 \sigma_2 \rho_1^{1} & = \rho_2^{1} \sigma_1 \sigma_2 
\end{align}
\begin{tcolorbox}[
  title=System of Equations Corresponding to Relations (4.1)–(4.2),
  colframe=black,
  colback=white,
  sharp corners,
  fonttitle=\bfseries,
]

\large

\begin{parcolumns}[colwidths={=0.48\textwidth}, rulebetween]{2}

\colchunk{
\textbf{\boldmath (i) $\sigma_1 \sigma_2 \rho_1^{0} =  \rho_2^{0} \sigma_1 \sigma_2$} \\
\rule{\linewidth}{0.4pt}

\begin{align}
aw + aby &= a, \\
ax + abz &= ab, \\
cw + ady &= cw, \\
cx + adz &= cx + adw, \\
dx + bdw &= bd, \\
cz + ady &= cz, \\
dz + bdy &= d,
\end{align}
}

\colchunk{
\textbf{\boldmath (ii) $\sigma_1 \sigma_2 \rho_1^{1} = \rho_2^{1} \sigma_1 \sigma_2$} \\
\rule{\linewidth}{0.4pt}

\begin{align}
ap + abr &= a, \\
aq + abs &= ab, \\
cp + adr &= cp, \\
cq + ads &= cq + adp, \\
dq + bdp &= bd, \\
cs + adr &= cs, \\
ds + bdr &= d.
\end{align}
}

\end{parcolumns}
\end{tcolorbox}

Solving equations (3.8)-(3.50) and (4.3)-(4.16), we get seven solutions which gives seven representations of $M_2WB_n$ which are:
\begin{itemize}
    \item [(1)] $\zeta_1: M_2WB_n \rightarrow \text{GL}_n(\mathbb{C})$ such that
    $$\zeta_1(\sigma_i) =\left( \begin{array}{c|@{}c|c@{}}
   \begin{matrix}
     I_{i-1} 
   \end{matrix} 
      & \textbf{0} & \textbf{0} \\
      \hline
    \textbf{0} &\hspace{0.2cm} \begin{matrix}
\begin{matrix}
1 & 0 \\
0 & 1
\end{matrix}

   		\end{matrix}  & \textbf{0}  \\
\hline
\textbf{0} & \textbf{0} & I_{n-i-1}
\end{array} \right), \
\zeta_1(\rho_i^{0}) =\left( \begin{array}{c|@{}c|c@{}}
   \begin{matrix}
     I_{i-1} 
   \end{matrix} 
      & \textbf{0} & \textbf{0} \\
      \hline
    \textbf{0} &\hspace{0.2cm} \begin{matrix}
   		\begin{matrix}
1 & 0 \\
0 & 1
\end{matrix}
   		\end{matrix}  & \textbf{0}  \\
\hline
\textbf{0} & \textbf{0} & I_{n-i-1}
\end{array} \right),$$
and
$$\zeta_1(\rho_i^{1}) =\left( \begin{array}{c|@{}c|c@{}}
   \begin{matrix}
     I_{i-1} 
   \end{matrix} 
      & \textbf{0} & \textbf{0} \\
      \hline
    \textbf{0} &\hspace{0.2cm} \begin{matrix}
   		\begin{matrix}
1 & 0 \\
0 & 1
\end{matrix}
   		\end{matrix}  & \textbf{0}  \\
\hline
\textbf{0} & \textbf{0} & I_{n-i-1}
\end{array} \right),$$ 

\item [(2)] $\zeta_2: M_2WB_n \rightarrow \text{GL}_n(\mathbb{C})$ such that
    $$\zeta_2(\sigma_i) =\left( \begin{array}{c|@{}c|c@{}}
   \begin{matrix}
     I_{i-1} 
   \end{matrix} 
      & \textbf{0} & \textbf{0} \\
      \hline
    \textbf{0} &\hspace{0.2cm} \begin{matrix}
\begin{matrix}
1-bc & b\\
c & 0
\end{matrix}

   		\end{matrix}  & \textbf{0}  \\
\hline
\textbf{0} & \textbf{0} & I_{n-i-1}
\end{array} \right), \
\zeta_2(\rho_i^{0}) =\left( \begin{array}{c|@{}c|c@{}}
   \begin{matrix}
     I_{i-1} 
   \end{matrix} 
      & \textbf{0} & \textbf{0} \\
      \hline
    \textbf{0} &\hspace{0.2cm} \begin{matrix}
   		\begin{matrix}
0 & b \\
\frac{1}{b} & 0
\end{matrix}
   		\end{matrix}  & \textbf{0}  \\
\hline
\textbf{0} & \textbf{0} & I_{n-i-1}
\end{array} \right),$$
and
$$\zeta_2(\rho_i^{1}) =\left( \begin{array}{c|@{}c|c@{}}
   \begin{matrix}
     I_{i-1} 
   \end{matrix} 
      & \textbf{0} & \textbf{0} \\
      \hline
    \textbf{0} &\hspace{0.2cm} \begin{matrix}
   		\begin{matrix}
1 & 0 \\
0 & 1
\end{matrix}
   		\end{matrix}  & \textbf{0}  \\
\hline
\textbf{0} & \textbf{0} & I_{n-i-1}
\end{array} \right),$$
where $b,c \in \mathbb{C}$ and $bc\neq0$.
   
\item [(3)] $\zeta_3: M_2WB_n \rightarrow \text{GL}_n(\mathbb{C})$ such that
   $$\zeta_3(\sigma_i) =\left( \begin{array}{c|@{}c|c@{}}
   \begin{matrix}
     I_{i-1} 
   \end{matrix} 
      & \textbf{0} & \textbf{0} \\
      \hline
    \textbf{0} &\hspace{0.2cm} \begin{matrix}
\begin{matrix}
0 & b\\
c & 0
\end{matrix}

   		\end{matrix}  & \textbf{0}  \\
\hline
\textbf{0} & \textbf{0} & I_{n-i-1}
\end{array} \right), \
\zeta_3(\rho_i^{0}) =\left( \begin{array}{c|@{}c|c@{}}
   \begin{matrix}
     I_{i-1} 
   \end{matrix} 
      & \textbf{0} & \textbf{0} \\
      \hline
    \textbf{0} &\hspace{0.2cm} \begin{matrix}
   		\begin{matrix}
0 & x \\
\frac{1}{x} & 0
\end{matrix}
   		\end{matrix}  & \textbf{0}  \\
\hline
\textbf{0} & \textbf{0} & I_{n-i-1}
\end{array} \right),$$
and
$$\zeta_3(\rho_i^{1}) =\left( \begin{array}{c|@{}c|c@{}}
   \begin{matrix}
     I_{i-1} 
   \end{matrix} 
      & \textbf{0} & \textbf{0} \\
      \hline
    \textbf{0} &\hspace{0.2cm} \begin{matrix}
   		\begin{matrix}
1 & 0 \\
0 & 1
\end{matrix}
   		\end{matrix}  & \textbf{0}  \\
\hline
\textbf{0} & \textbf{0} & I_{n-i-1}
\end{array} \right),$$
where $b,c,x \in \mathbb{C}$ , $bc\neq0$ and $x\neq0$.
   
\item[(4)] $\zeta_4: M_2WB_n \rightarrow \text{GL}_n(\mathbb{C})$ such that
   $$\zeta_4(\sigma_i) =\left( \begin{array}{c|@{}c|c@{}}
   \begin{matrix}
     I_{i-1} 
   \end{matrix} 
      & \textbf{0} & \textbf{0} \\
      \hline
    \textbf{0} &\hspace{0.2cm} \begin{matrix}
\begin{matrix}
0 & b\\
\frac{1-d}{b} & d
\end{matrix}

	\end{matrix}  & \textbf{0}  \\
\hline
\textbf{0} & \textbf{0} & I_{n-i-1}
\end{array} \right),\ 
\zeta_4(\rho_i^{0}) =\left( \begin{array}{c|@{}c|c@{}}
   \begin{matrix}
     I_{i-1} 
   \end{matrix} 
      & \textbf{0} & \textbf{0} \\
      \hline
    \textbf{0} &\hspace{0.2cm} \begin{matrix}
   		\begin{matrix}
0 & b \\
\frac{1}{b} & 0
\end{matrix}
   		\end{matrix}  & \textbf{0}  \\
\hline
\textbf{0} & \textbf{0} & I_{n-i-1}
\end{array} \right),$$
and
$$\zeta_4(\rho_i^{1}) =\left( \begin{array}{c|@{}c|c@{}}
   \begin{matrix}
     I_{i-1} 
   \end{matrix} 
      & \textbf{0} & \textbf{0} \\
      \hline
    \textbf{0} &\hspace{0.2cm} \begin{matrix}
   		\begin{matrix}
1 & 0 \\
0 & 1
\end{matrix}
   		\end{matrix}  & \textbf{0}  \\
\hline
\textbf{0} & \textbf{0} & I_{n-i-1}
\end{array} \right),$$
where $b,d \in \mathbb{C}$, $d\neq1$ and $b\neq0$.

\item [(5)] $\zeta_5: M_2WB_n \rightarrow \text{GL}_n(\mathbb{C})$ such that
 $$\zeta_5(\sigma_i) =\left( \begin{array}{c|@{}c|c@{}}
   \begin{matrix}
     I_{i-1} 
   \end{matrix} 
      & \textbf{0} & \textbf{0} \\
      \hline
    \textbf{0} &\hspace{0.2cm} \begin{matrix}
\begin{matrix}
1-bc & b\\
c & 0
\end{matrix}

   		\end{matrix}  & \textbf{0}  \\
\hline
\textbf{0} & \textbf{0} & I_{n-i-1}
\end{array} \right),\
\zeta_5(\rho_i^{0}) =\left( \begin{array}{c|@{}c|c@{}}
   \begin{matrix}
     I_{i-1} 
   \end{matrix} 
      & \textbf{0} & \textbf{0} \\
      \hline
    \textbf{0} &\hspace{0.2cm} \begin{matrix}
   		\begin{matrix}
0 & b \\
\frac{1}{b} & 0
\end{matrix}
   		\end{matrix}  & \textbf{0}  \\
\hline
\textbf{0} & \textbf{0} & I_{n-i-1}
\end{array} \right),$$
and
$$\zeta_5(\rho_i^{1}) =\left( \begin{array}{c|@{}c|c@{}}
   \begin{matrix}
     I_{i-1} 
   \end{matrix} 
      & \textbf{0} & \textbf{0} \\
      \hline
    \textbf{0} &\hspace{0.2cm} \begin{matrix}
   		\begin{matrix}
0 & b \\
\frac{1}{b} & 0 
\end{matrix}
   		\end{matrix}  & \textbf{0}  \\
\hline
\textbf{0} & \textbf{0} & I_{n-i-1}
\end{array} \right),$$
where $b,c \in \mathbb{C}$ and $bc\neq0$.

\item [(6)] $\zeta_6: M_2WB_n \rightarrow \text{GL}_n(\mathbb{C})$ such that
$$\zeta_6(\sigma_i) =\left( \begin{array}{c|@{}c|c@{}}
   \begin{matrix}
     I_{i-1} 
   \end{matrix} 
      & \textbf{0} & \textbf{0} \\
      \hline
    \textbf{0} &\hspace{0.2cm} \begin{matrix}
\begin{matrix}
0 & b\\
c & 0
\end{matrix}

   		\end{matrix}  & \textbf{0}  \\
\hline
\textbf{0} & \textbf{0} & I_{n-i-1}
\end{array} \right),\\
\zeta_6(\rho_i^{0}) =\left( \begin{array}{c|@{}c|c@{}}
   \begin{matrix}
     I_{i-1} 
   \end{matrix} 
      & \textbf{0} & \textbf{0} \\
      \hline
    \textbf{0} &\hspace{0.2cm} \begin{matrix}
   		\begin{matrix}
0 & x \\
\frac{1}{x} & 0
\end{matrix}
   		\end{matrix}  & \textbf{0}  \\
\hline
\textbf{0} & \textbf{0} & I_{n-i-1}
\end{array} \right),$$
and
$$\zeta_6(\rho_i^{1}) =\left( \begin{array}{c|@{}c|c@{}}
   \begin{matrix}
     I_{i-1} 
   \end{matrix} 
      & \textbf{0} & \textbf{0} \\
      \hline
    \textbf{0} &\hspace{0.2cm} \begin{matrix}
   		\begin{matrix}
0 & q \\
\frac{1}{q} & 0
\end{matrix}
   		\end{matrix}  & \textbf{0}  \\
\hline
\textbf{0} & \textbf{0} & I_{n-i-1}
\end{array} \right),$$
where $b,c,x \in \mathbb{C}$, $bc\neq0$,$x\neq0$ and $q\neq0$.

\item[(7)] $\zeta_7: M_2WB_n \rightarrow \text{GL}_n(\mathbb{C})$ such that
 $$\zeta_7(\sigma_i) =\left( \begin{array}{c|@{}c|c@{}}
   \begin{matrix}
     I_{i-1} 
   \end{matrix} 
      & \textbf{0} & \textbf{0} \\
      \hline
    \textbf{0} &\hspace{0.2cm} \begin{matrix}
\begin{matrix}
0 & b\\
\frac{1-d}{b} & d
\end{matrix}

   		\end{matrix}  & \textbf{0}  \\
\hline
\textbf{0} & \textbf{0} & I_{n-i-1}
\end{array} \right),\
\zeta_7(\rho_i^{0}) =\left( \begin{array}{c|@{}c|c@{}}
   \begin{matrix}
     I_{i-1} 
   \end{matrix} 
      & \textbf{0} & \textbf{0} \\
      \hline
    \textbf{0} &\hspace{0.2cm} \begin{matrix}
   		\begin{matrix}
0 & b \\
\frac{1}{b} & 0
\end{matrix}
   		\end{matrix}  & \textbf{0}  \\
\hline
\textbf{0} & \textbf{0} & I_{n-i-1}
\end{array} \right),$$
and
$$\zeta_7(\rho_i^{1}) =\left( \begin{array}{c|@{}c|c@{}}
   \begin{matrix}
     I_{i-1} 
   \end{matrix} 
      & \textbf{0} & \textbf{0} \\
      \hline
    \textbf{0} &\hspace{0.2cm} \begin{matrix}
   		\begin{matrix}
0 & b \\
\frac{1}{b} & 0
\end{matrix}
   		\end{matrix}  & \textbf{0}  \\
\hline
\textbf{0} & \textbf{0} & I_{n-i-1}
\end{array} \right),$$
where $b,d \in \mathbb{C}$,$d\neq1$, and $b\neq0$.
\end{itemize}
In the case of \( M_3W B_n \), only the three new relations (4.17)--(4.19), together with the relations of \( M_3V B_n \) \((3.51)-(3.61)\), generate the resulting set of new equations.
\begin{align}
  \sigma_1 \sigma_2 \rho_1^{0} & =  \rho_2^{0} \sigma_1 \sigma_2,\\
    \sigma_1 \sigma_2 \rho_1^{1} & = \rho_2^{1} \sigma_1 \sigma_2,\\ 
    \sigma_1 \sigma_2 \rho_1^{2} & = \rho_2^{2} \sigma_1 \sigma_2.
\end{align} which  gives twenty one new equations (4.3)-(4.16) and (4.20)-(4.26),
\begin{tcolorbox}[
  title=System of Equations Corresponding to Relations (4.19),
  colframe=black,
  colback=white,
  sharp corners,
  fonttitle=\bfseries,
  ]
\large  
\textbf{\boldmath (i) $\sigma_1 \sigma_2 \rho_1^{2} = \rho_2^{2} \sigma_1 \sigma_2$}
\begin{align}
  af + abh &= a, \\
ag + abi &= ab, \\
cf + adh &= cf, \\
cg + adi &= cg + adf, \\
dg + bdf & = bd, \\
ci + adh & = ci, \\
di + bdh & = d.  
\end{align}
\end{tcolorbox}
Solving this system of equations yields thirteen distinct solutions, each of which corresponds to a representation of the group $M_3WB_n$.
Similarly, for the general case of $M_kWB_n$ by solving the analogous set of equations along with the equations of $M_kVB_n$, we obtain all $3\cdot2^{k-1} + 1$ representation of $M_kWB_n$.
\end{proof}
To conclude this section, we propose the following question for further examination.
\begin{question}
What are the possible forms of the complex homogeneous $2$-local representations of multi-unrestricted braid group $M_kUB_n$ and there properties like faithfulness and Irreducibility?
\end{question}

\section{Extending LKBR representation of $B_n$ to $M_kWB_n$}

In this section, we consider a representation of $B_n$ which is not local, namely Lawrence-Krammer-Bigelow (LKB) representation. We aim to extend this representation to $M_kWB_n$ in some cases, in order to construct non-local representations of $M_kWB_n$. In the following, we set the explicit definition of the LKB representation.
 
\begin{definition} \cite{Law1990,Kram2002,Big2001}. \label{defLaw}
Let $V$ be a free $R$-module with basis $\{x_{i,j}, {1 \leq i < j \leq n}\}$, where $R=\mathbb{Z}[t^{\pm 1},q^{\pm 1}]$, the ring of Laurent polynomials on two variables $q$ and $t$. The LKB representation, $\mathcal{K}: B_n \rightarrow \text{GL}_{\frac{n(n-1)}{2}}(\mathbb{Z}[t^{\pm 1},q^{\pm 1}])$, for $n\geq 3$, is given by acting on the generators $\sigma_k, 1\leq k \leq n-1$, as follows.\\

$\mathcal{K}(\sigma_k)(x_{i,j})= \left\{\begin{array}{l}
tq^2x_{k,k+1}, \hspace{3.75cm} i=k<k+1=j, \\
(1-q)x_{i,k}+qx_{i,k+1}, \hspace{2.05cm} i<k=j,\\
x_{i,k}+tq^{k-i+1}(q-1)x_{k,k+1}, \hspace{1cm} i<k<k+1=j,\\
tq(q-1)x_{k,k+1}+qx_{k+1,j}, \hspace{1.3cm} i=k<k+1<j,\\
x_{k,j}+(1-q)x_{k+1,j}, \hspace{2.15cm} k<i=k+1<j,\\
x_{i,j}, \hspace{4.6cm} i<j<k \hspace{0.15cm} or \hspace{0.15cm} k+1<i<j,\\
x_{i,j}+tq^{k-i}(q-1)^2x_{k,k+1}, \hspace{1.2cm} i<k<k+1<j.
\end{array}\right.$
\end{definition}

\vspace{0.05cm}

We now introduce an extension of LKB representation of $B_n$ to $WB_n$ by letting $t=1$. This representation was given by V. Bardakov and P. Bellingeri in \cite{B2003}.

\begin{definition} \cite{B2003}
Let $V$ be a free $R$-module with basis $\{x_{i,j}\}, 1\leq i< j\leq n$, where $R=\mathbb{Z}[q^{\pm1}]$, the ring of Laurent polynomials on one variable $q$. We introduce the representation $\tilde{\mathcal{K}}:WB_n \rightarrow \text{GL}_\frac{n(n-1)}{2}(V)$, $n\geq 3,$ by the actions of $\sigma_i's$ and $\rho_i's$, $i=1, \ldots, n-1,$ on the basis of $V$ as follows.

\vspace{0.1cm}
$\left\{\begin{array}{l}
\sigma_i(x_{k,i})=(1-q)x_{k,i}+qx_{k,{i+1}}+q(q-1)x_{i,i+1},\\
\sigma_i(x_{k,{i+1}})=x_{k,i}$, \hspace{0.5cm} $k<i,\\
\sigma_i(x_{i,{i+1}})=q^2x_{i,{i+1}},\\
\sigma_i(x_{i,l})=q(q-1)x_{i,{i+1}}+(1-q)x_{i,l}+qx_{i+1,l},$ \hspace{0.5cm}$ i+1<l,\\
\sigma_i(x_{i+1,l})=x_{i,l},\\
\sigma_i(x_{k,l})=x_{k,l}$, \hspace{0.5cm} $\{ k,l\} \cap \{i,i+1\}=\emptyset, \\
\rho_i(x_{k,i})=x_{k,{i+1}},\\
\rho_i(x_{k,{i+1}})=x_{k,i}$, \hspace{0.5cm} $k<i,\\
\rho_i(x_{i,{i+1}})=x_{i,{i+1}},\\
\rho_i(x_{i,l})=x_{i+1,l},$ \hspace{0.5cm}$ i+1<l,\\
\rho_i(x_{i+1,l})=x_{i,l},\\
\rho_i(x_{k,l})=x_{k,l}$, \hspace{0.5cm} $\{ k,l\} \cap \{i,i+1\}=\emptyset. \\
\end{array} \right.$ 
\end{definition}

As the previous representation is complicated, we focus in our work for $n=3$. The representation $\tilde{\mathcal{K}}$ defined by the actions of the generators $\sigma_1, \sigma_2$, $\rho_1$, and $\rho_2$  of $WB_3$ on the standard basis $\{x_{1,2}, x_{1,3}, x_{2,3}\}$ as follows:
\[
\sigma_1 \to \begin{cases}
x_{1,2} \to q^2 x_{1,2} \\
x_{1,3} \to q(q - 1)x_{1,2} + (1 - q)x_{1,3} + q x_{2,3} \\
x_{2,3} \to x_{1,3}
\end{cases}, \]
\[
\sigma_2 \to \begin{cases}
x_{1,2} \to (1 - q)x_{1,2} + q x_{1,3} + q(q - 1)x_{2,3} \\
x_{1,3} \to x_{1,2} \\
x_{2,3} \to q^2 x_{2,3}
\end{cases},
\]
\[
\rho_1 \to \begin{cases}
x_{1,2} \to x_{1,2}\\
x_{1,3} \to x_{2,3} \\
x_{2,3} \to x_{1,3}
\end{cases}, \ \ and \ \ 
\rho_2 \to \begin{cases}
x_{1,2} \to x_{1,3}\\
x_{1,3} \to x_{1,2} \\
x_{2,3} \to x_{2,3}
\end{cases}.
\]
Thus, we can see that

$$\hat{\mathcal{K}}(\sigma_1)=
\left( \begin{array}{@{}c@{}}
\begin{matrix}
   		q^2 & q(q-1) & 0 \\
    	0 & 1-q & 1 \\
        0 & q & 0 \\
\end{matrix}
\end{array} \right), \hspace{0.5cm}
\hat{\mathcal{K}}(\sigma_2)=
\left( \begin{array}{@{}c@{}}
\begin{matrix}
   		1-q & 1 & 0 \\
    	q & 0 & 0 \\
        q(q-1) & 0 & q^2 \\
\end{matrix}
\end{array} \right),$$

$$\hat{\mathcal{K}}(\rho_1)=
\left( \begin{array}{@{}c@{}}
\begin{matrix}
   		1 & 0 & 0 \\
    	0 & 0 & 1 \\
        0 & 1 & 0 \\
\end{matrix}
\end{array} \right), \ \text{and} \ \
\hat{\mathcal{K}}(\rho_2)=
\left( \begin{array}{@{}c@{}}
\begin{matrix}
   		0 & 1 & 0 \\
    	1 & 0 & 0 \\
        0 & 0 & 1 \\
\end{matrix}
\end{array} \right).$$


\begin{proposition} \label{ppp}
Consider the mapping $\hat{\mathcal{K}}:  M_2VB_3 \rightarrow \text{GL}_3(\mathbb{Z}[q^{\pm 1}])$ defined by acting on the generators of $M_2VB_3$ as follows.

$$\hat{\mathcal{K}}(\sigma_1)=
\left( \begin{array}{@{}c@{}}
\begin{matrix}
   		q^2 & q(q-1) & 0 \\
    	0 & 1-q & 1 \\
        0 & q & 0 \\
\end{matrix}
\end{array} \right), \hspace{0.5cm}
\hat{\mathcal{K}}(\sigma_2)=
\left( \begin{array}{@{}c@{}}
\begin{matrix}
   		1-q & 1 & 0 \\
    	q & 0 & 0 \\
        q(q-1) & 0 & q^2 \\
\end{matrix}
\end{array} \right),$$

$$\hat{\mathcal{K}}(\rho_1^0)=
\left( \begin{array}{@{}c@{}}
\begin{matrix}
   		1 & 0 & 0 \\
    	0 & 0 & 1 \\
        0 & 1 & 0 \\
\end{matrix}
\end{array} \right), \hspace{0.5cm}
\hat{\mathcal{K}}(\rho_2^0)=
\left( \begin{array}{@{}c@{}}
\begin{matrix}
   		0 & 1 & 0 \\
    	1 & 0 & 0 \\
        0 & 0 & 1 \\
\end{matrix}
\end{array} \right),$$
$$\hat{\mathcal{K}}(\rho_1^1)=
\left( \begin{array}{@{}c@{}}
\begin{matrix}
   	0 & b & 0 \\
    	\frac{1}{b} & 0 & 0\\
        0 & 0 & 1 \\
\end{matrix}
\end{array} \right), \text{ and }
\hat{\mathcal{K}}(\rho_2^1)=
\left( \begin{array}{@{}c@{}}
\begin{matrix}
   	1 & 0 & 0 \\
    	0 & 0 & b \\
        0 & \frac{1}b & 0 \\
\end{matrix}
\end{array} \right).$$

Then, $\hat{\mathcal{K}}$ is a representation of $M_2WB_3$.
\end{proposition}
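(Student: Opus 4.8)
The plan is to verify that the six generator images $\hat{\mathcal{K}}(\sigma_1),\hat{\mathcal{K}}(\sigma_2),\hat{\mathcal{K}}(\rho_1^0),\hat{\mathcal{K}}(\rho_2^0),\hat{\mathcal{K}}(\rho_1^1),\hat{\mathcal{K}}(\rho_2^1)$ satisfy every defining relation of $M_2WB_3$; a set-map on generators extends to a group homomorphism precisely when the images satisfy all defining relations, so this is exactly what must be shown. For $n=3$ and $k=2$ all relations requiring $|i-j|\ge 2$ are vacuous, and the surviving relations are the braid relation $\sigma_1\sigma_2\sigma_1=\sigma_2\sigma_1\sigma_2$, the involutions $(\rho_i^{\alpha})^2=e$, the virtual braid relations $\rho_1^{\alpha}\rho_2^{\alpha}\rho_1^{\alpha}=\rho_2^{\alpha}\rho_1^{\alpha}\rho_2^{\alpha}$, the detour relation $\sigma_1\rho_2^0\rho_1^0=\rho_2^0\rho_1^0\sigma_2$, the mixed relation $\rho_1^0\rho_2^0\rho_1^1=\rho_2^1\rho_1^0\rho_2^0$, and the two forbidden-move relations $\sigma_1\sigma_2\rho_1^{\alpha}=\rho_2^{\alpha}\sigma_1\sigma_2$ for $\alpha=0,1$.

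The first reduction is to observe that the restriction of $\hat{\mathcal{K}}$ to the four generators $\sigma_1,\sigma_2,\rho_1^0,\rho_2^0$ coincides exactly with the matrices displayed for the Bardakov--Bellingeri representation $\tilde{\mathcal{K}}$ of $WB_3$ from \cite{B2003} (specialised to $t=1$). Since that is already a representation of $WB_3$, every relation of $M_2WB_3$ that involves only these four generators holds with no further work: this settles the braid relation, $(\rho_i^0)^2=e$, the virtual braid relation for $\alpha=0$, the detour relation, and the forbidden-move relation for $\alpha=0$. It therefore remains only to treat the relations that genuinely involve the two new generators $\rho_1^1,\rho_2^1$.

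Among these, the involutions $(\rho_1^1)^2=(\rho_2^1)^2=e$ are immediate, since in each case the only nontrivial action is the $2\times2$ anti-diagonal block $\left(\begin{smallmatrix}0&b\\ 1/b&0\end{smallmatrix}\right)$, whose square is $I_2$ because $b\cdot(1/b)=1$, the remaining coordinate being fixed; the relation $\rho_1^1\rho_2^1\rho_1^1=\rho_2^1\rho_1^1\rho_2^1$ is a direct $3\times3$ product that one checks equals $\left(\begin{smallmatrix}0&0&b^2\\ 0&1&0\\ 1/b^2&0&0\end{smallmatrix}\right)$ on both sides, for every nonzero $b$. The decisive simplification for the remaining two relations is to precompute $\hat{\mathcal{K}}(\sigma_1)\hat{\mathcal{K}}(\sigma_2)=q^2P$ and $\hat{\mathcal{K}}(\rho_1^0)\hat{\mathcal{K}}(\rho_2^0)=P$, where $P$ is the single cyclic permutation matrix with $e_1\mapsto e_3\mapsto e_2\mapsto e_1$. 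With this, the mixed relation becomes $P\,\hat{\mathcal{K}}(\rho_1^1)=\hat{\mathcal{K}}(\rho_2^1)\,P$, and, after cancelling the common scalar $q^2$, the forbidden-move relation for $\alpha=1$ becomes the very same identity; thus both collapse to the single conjugation condition $\hat{\mathcal{K}}(\rho_2^1)=P\,\hat{\mathcal{K}}(\rho_1^1)\,P^{-1}$.

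The main obstacle is exactly this conjugation condition: it is the only relation that couples the $b$-deformed generators $\rho_1^1,\rho_2^1$ to the permutation/braid data carried by $P$, and it is where all the genuine content of the statement concentrates, since everything else either follows from the known $WB_3$ representation or holds identically in $b$. I would therefore carry out the two $3\times3$ products $P\,\hat{\mathcal{K}}(\rho_1^1)$ and $\hat{\mathcal{K}}(\rho_2^1)\,P$ explicitly and compare their nine entries as Laurent polynomials in $q$ with $b\neq 0$ a free parameter; the entire proposition reduces to the agreement of these two matrices.
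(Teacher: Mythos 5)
Your reduction scheme is sound and, as far as it goes, every step checks out: the restriction to $\sigma_1,\sigma_2,\rho_1^0,\rho_2^0$ is exactly the displayed Bardakov--Bellingeri representation of $WB_3$, so all relations involving only those generators come for free; the involutions $(\rho_i^1)^2=e$ are immediate; both sides of $\rho_1^1\rho_2^1\rho_1^1=\rho_2^1\rho_1^1\rho_2^1$ do equal $\left(\begin{smallmatrix}0&0&b^2\\0&1&0\\b^{-2}&0&0\end{smallmatrix}\right)$; one indeed has $\hat{\mathcal{K}}(\sigma_1)\hat{\mathcal{K}}(\sigma_2)=q^2P$ and $\hat{\mathcal{K}}(\rho_1^0)\hat{\mathcal{K}}(\rho_2^0)=P$ with $P$ the $3$-cycle you describe; and the mixed relation together with the forbidden move for $\alpha=1$ do collapse to the single identity $P\,\hat{\mathcal{K}}(\rho_1^1)=\hat{\mathcal{K}}(\rho_2^1)\,P$. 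Your list of surviving relations for $n=3$, $k=2$ is also complete. The gap is that you defer precisely the one computation that carries all the content --- and that computation does not verify. With the matrices as printed, $P\,\hat{\mathcal{K}}(\rho_1^1)=\left(\begin{smallmatrix}b^{-1}&0&0\\0&0&1\\0&b&0\end{smallmatrix}\right)$ while $\hat{\mathcal{K}}(\rho_2^1)\,P=\left(\begin{smallmatrix}0&1&0\\b&0&0\\0&0&b^{-1}\end{smallmatrix}\right)$; the $(1,1)$ entries are $b^{-1}$ and $0$, so the two matrices disagree for every nonzero $b$, and hence the relations $\rho_1^0\rho_2^0\rho_1^1=\rho_2^1\rho_1^0\rho_2^0$ and $\sigma_1\sigma_2\rho_1^1=\rho_2^1\sigma_1\sigma_2$ both fail. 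Your proposal therefore cannot be completed into a proof of the proposition as stated.

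The source of the failure is in the statement rather than in your method: the matrix assigned to $\rho_1^1$ is a $b$-deformation of $\hat{\mathcal{K}}(\rho_2^0)$ (it fixes the third basis vector), and the matrix assigned to $\rho_2^1$ is a deformation of $\hat{\mathcal{K}}(\rho_1^0)$; the two appear to have been interchanged. If one sets instead $\hat{\mathcal{K}}(\rho_1^1)=\left(\begin{smallmatrix}1&0&0\\0&0&b\\0&b^{-1}&0\end{smallmatrix}\right)$ and $\hat{\mathcal{K}}(\rho_2^1)=\left(\begin{smallmatrix}0&b&0\\b^{-1}&0&0\\0&0&1\end{smallmatrix}\right)$, then $P\,\hat{\mathcal{K}}(\rho_1^1)=\hat{\mathcal{K}}(\rho_2^1)\,P=\left(\begin{smallmatrix}0&0&b\\0&b^{-1}&0\\1&0&0\end{smallmatrix}\right)$ and your argument closes completely. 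The paper's own proof consists of the single sentence that ``direct computations'' verify the relations, so it provides no check against this; your organisation of the computation is the right one and is what exposes the error, but you must actually execute the final comparison (and correct the generator images) before the proposition can be considered proved.
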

\begin{proof}
Direct computations imply that the relations of $M_2WB_3$ are satisfied for this mapping.
\end{proof}

Now, the following results are regarding the irreducibility and the faithfulness of the representation $\hat{\mathcal{K}}$ of $M_2WB_3$ in the next two theorems.

\begin{theorem}
The representation $\hat{\mathcal{K}}:  M_2WB_3 \rightarrow \text{GL}_3(\mathbb{Z}[q^{\pm 1}])$ given in Proposition \ref{ppp} is irreducible.
\end{theorem}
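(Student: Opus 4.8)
The plan is to work over the fraction field $F = \mathbb{Q}(q)$, regard $\hat{\mathcal{K}}$ as acting on $V = F^3$, and suppose toward a contradiction that $W \subseteq V$ is an invariant subspace with $0 < \dim W < 3$. The key idea is not to confront all six generator matrices at once, but to exploit the subgroup $H = \langle \rho_1^0, \rho_2^0 \rangle$, whose images are permutation matrices. Writing the basis as $e_1 = x_{1,2}$, $e_2 = x_{1,3}$, $e_3 = x_{2,3}$, one checks directly that $\hat{\mathcal{K}}(\rho_1^0)$ realizes the transposition $(2\,3)$ and $\hat{\mathcal{K}}(\rho_2^0)$ realizes $(1\,2)$; these two transpositions generate $S_3$, so $\hat{\mathcal{K}}|_H$ is precisely the permutation representation of $S_3$ on $F^3$.

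Over a field of characteristic zero this permutation representation decomposes as the trivial line $L = \langle (1,1,1)^T \rangle$ together with the standard $2$-dimensional irreducible $U = \{(x,y,z)^T : x + y + z = 0\}$, and $L \not\cong U$. Since $U$ is irreducible, the complete list of $H$-invariant subspaces is $\{0\},\ L,\ U,\ V$. As $W$ is in particular invariant under $H$, we conclude $W \in \{L, U\}$. It therefore suffices to show that neither $L$ nor $U$ is preserved by one further generator, and $\sigma_1$ will serve.

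For the line $L$, a direct computation gives $\hat{\mathcal{K}}(\sigma_1)(1,1,1)^T = (2q^2 - q,\ 2 - q,\ q)^T$, which is proportional to $(1,1,1)^T$ only if $2 - q = q$, i.e. $q = 1$; this is impossible since $q$ is transcendental over $\mathbb{Q}$, so $\hat{\mathcal{K}}(\sigma_1)L \not\subseteq L$. For the plane $U$, applying $\hat{\mathcal{K}}(\sigma_1)$ to the element $(1,-1,0)^T \in U$ yields $(q,\ q-1,\ -q)^T$, whose coordinate sum is $q - 1 \neq 0$; hence $\hat{\mathcal{K}}(\sigma_1)(1,-1,0)^T \notin U$ and $U$ is not invariant either. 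Both candidates being eliminated, no proper nonzero invariant subspace exists, and $\hat{\mathcal{K}}$ is irreducible.

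Finally, I would note where the real work lies. There is no serious obstacle: the decisive move is recognizing $\langle \rho_1^0, \rho_2^0 \rangle$ as an $S_3$ acting through its permutation representation, which collapses the search for invariant subspaces to the four candidates $\{0\}, L, U, V$ and reduces the problem to two explicit matrix--vector products. The only point demanding care is that these computations be carried out over $F = \mathbb{Q}(q)$ rather than by specializing $q$, so that the inequalities $2 - q \neq q$ and $q - 1 \neq 0$ genuinely hold; note that the parameter $b$ and the generators $\rho_i^1$ are never needed, since $\rho_1^0$, $\rho_2^0$, and $\sigma_1$ already suffice. Alternatively, one could instead verify that the restriction $\hat{\mathcal{K}}|_{B_3} = \langle \hat{\mathcal{K}}(\sigma_1), \hat{\mathcal{K}}(\sigma_2)\rangle$ is irreducible and invoke Lemma~\ref{llem}, but the $S_3$ argument is more transparent because its invariant subspaces are known a priori.
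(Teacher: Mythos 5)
Your proof is correct, but it takes a different route from the paper. The paper's argument is a one-line application of Lemma~\ref{llem} with $H=B_3$: it cites the irreducibility of the LKB representation of $B_n$ from the Levaillant--Wales paper and concludes that the restriction $\hat{\mathcal{K}}|_{B_3}$, and hence $\hat{\mathcal{K}}$ itself, is irreducible. You instead apply the same underlying principle (an invariant subspace for the whole group is invariant for any subgroup) to the subgroup $H=\langle\rho_1^0,\rho_2^0\rangle$, whose image is the full group of $3\times 3$ permutation matrices, so that the only candidate invariant subspaces are $0$, $L=\langle(1,1,1)^T\rangle$, $U=\{x+y+z=0\}$, and $V$; two short matrix--vector computations with $\hat{\mathcal{K}}(\sigma_1)$ then eliminate $L$ and $U$. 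Both of your computations check out ($\hat{\mathcal{K}}(\sigma_1)(1,1,1)^T=(2q^2-q,\,2-q,\,q)^T$ and $\hat{\mathcal{K}}(\sigma_1)(1,-1,0)^T=(q,\,q-1,\,-q)^T$, with coordinate sum $q-1\neq 0$). What your approach buys is self-containedness: the paper's citation concerns the two-parameter LKB representation, whereas the representation at hand is its specialization at $t=1$, and Levaillant--Wales is precisely about which parameter values make LKB reducible, so strictly speaking the paper owes the reader a check that $t=1$ (with $q$ generic) is not among the degenerate values; your argument sidesteps this entirely at the cost of two explicit computations. Your closing observation that one could alternatively verify irreducibility of $\hat{\mathcal{K}}|_{B_3}$ and invoke Lemma~\ref{llem} is in fact exactly the paper's proof.
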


\begin{proof}
    It has been proven in \cite{Lev2010} that LKB representation of $B_n$ is irreducible. Therefore, $\hat{\mathcal{K}}$ is irreducible by Lemma \ref{llem}.
\end{proof}

\begin{theorem}
The representation $\hat{\mathcal{K}}:  M_2WB_3 \rightarrow \text{GL}_3(\mathbb{Z}[q^{\pm 1}])$ given in Proposition \ref{ppp} is unfaithful.
\end{theorem}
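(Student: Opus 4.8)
The plan is to prove unfaithfulness by producing an explicit nontrivial element of $M_2WB_3$ that lies in $\ker\hat{\mathcal{K}}$. The natural place to search is the subgroup generated by the two involutions $\rho_1^0$ and $\rho_1^1$ carried by the same index $1$, since these are the generators whose images least reflect the group structure. First I would compute the product matrix $\hat{\mathcal{K}}(\rho_1^0\rho_1^1)$. A direct multiplication gives the monomial matrix $\hat{\mathcal{K}}(\rho_1^0\rho_1^1)=\left(\begin{smallmatrix} 0 & b & 0\\ 0 & 0 & 1\\ 1/b & 0 & 0\end{smallmatrix}\right)$, which cyclically permutes the three coordinate lines with scalars $b,\,1,\,1/b$ whose product is $1$. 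Consequently its cube is $I_3$, so that $\hat{\mathcal{K}}\bigl((\rho_1^0\rho_1^1)^3\bigr)=I_3$.

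It then remains to check that $w:=(\rho_1^0\rho_1^1)^3$ is not trivial in $M_2WB_3$. For this I would build an auxiliary homomorphism $\psi: M_2WB_3 \to D_\infty$ onto the infinite dihedral group $D_\infty=\langle s,t \mid s^2=t^2=1\rangle\cong \mathbb{Z}_2 * \mathbb{Z}_2$, defined on generators by $\sigma_1,\sigma_2\mapsto 1$, $\ \rho_1^0,\rho_2^0\mapsto s$, and $\rho_1^1,\rho_2^1\mapsto t$. Verifying that $\psi$ is well defined amounts to evaluating each defining relation of $M_2WB_3$ (for $n=3$, $k=2$) on these images: the involutive relations reduce to $s^2=t^2=1$; the virtual braid relations collapse to the identities $s^3=s$ and $t^3=t$; the detour relation $\sigma_1\rho_2^0\rho_1^0=\rho_2^0\rho_1^0\sigma_2$ reduces to $s^2=s^2$; the mixed relation $\rho_1^0\rho_2^0\rho_1^1=\rho_2^1\rho_1^0\rho_2^0$ reduces to $t=t$ (using $s^2=1$); and the welded move $\sigma_1\sigma_2\rho_1^\alpha=\rho_2^\alpha\sigma_1\sigma_2$ reduces to $s=s$ or $t=t$. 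Granting this, $\psi(w)=(st)^3\neq 1$ because $st$ has infinite order in $D_\infty$, and therefore $w\neq e$ in $M_2WB_3$.

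Combining the two steps, $w$ is a nontrivial element of $\ker\hat{\mathcal{K}}$, so $\hat{\mathcal{K}}$ is not faithful. The main obstacle I anticipate is the careful bookkeeping in the second step: one must confirm that $\psi$ respects \emph{every} relation, in particular the special detour relation and the welded move F1 that distinguish $M_2WB_3$ from a generic product of dihedral factors. The device that makes these collapse is sending all the $\sigma_i$ to $1$; the crucial point to check is that no relation forces $s=t$ or $st=1$, either of which would destroy the argument. A shorter but essentially equivalent alternative would be to argue directly that $\langle \rho_1^0,\rho_1^1\rangle\cong \mathbb{Z}_2 * \mathbb{Z}_2$ inside $M_2WB_3$, but the explicit homomorphism $\psi$ provides the cleanest certificate that $w\neq e$.
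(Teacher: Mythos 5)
Your proof is correct, but it takes a genuinely different route from the paper. The paper disposes of the statement in one line by citing an earlier result that the restriction of $\hat{\mathcal{K}}$ to $WB_3$ (the subgroup generated by the $\sigma_i$ and the $\rho_i^0$) is already unfaithful; the kernel element thus lives entirely in the welded part, and no computation is shown. You instead exhibit an explicit nontrivial kernel element, $w=(\rho_1^0\rho_1^1)^3$, built from the interaction of the \emph{two different types} of virtual generators: your matrix computation $\hat{\mathcal{K}}(\rho_1^0\rho_1^1)=\bigl(\begin{smallmatrix}0&b&0\\0&0&1\\1/b&0&0\end{smallmatrix}\bigr)$ is correct, its cube is indeed $I_3$, and your certificate of nontriviality --- the homomorphism $\psi:M_2WB_3\to\mathbb{Z}_2*\mathbb{Z}_2$ killing the $\sigma_i$ and identifying $\rho_1^\alpha$ with $\rho_2^\alpha$ --- does respect every defining relation of $M_2WB_3$ for $n=3$, $k=2$ (the mixed relation $\rho_1^0\rho_2^0\rho_1^1=\rho_2^1\rho_1^0\rho_2^0$ and the forbidden move F1 both collapse because $\rho_1^\alpha$ and $\rho_2^\alpha$ have the same image), and $(st)^3\neq 1$ in $D_\infty$. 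What your approach buys is self-containedness and robustness: it requires no external faithfulness result, and it sidesteps a subtlety the paper leaves implicit, namely that unfaithfulness of the composite $WB_3\to M_2WB_3\to \mathrm{GL}_3$ only yields a nontrivial kernel element of $M_2WB_3$ if one also knows the canonical map $WB_3\to M_2WB_3$ is injective; your quotient $\psi$ certifies nontriviality directly inside $M_2WB_3$. The paper's approach, on the other hand, is shorter and locates the failure of faithfulness already in the classical welded subgroup rather than in the multi-virtual structure.
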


\begin{proof}
In \cite{123456}, it has been proven that the restriction of the representation $\hat{\mathcal{K}}$ to $WB_3$ is unfaithful. Thus, $\hat{\mathcal{K}}$ is unfaithful.
\end{proof}

\section{Conclusion}
In this paper, we have discussed all complex homogeneous $2$-local representations of the multi-virtual braid group $M_kVB_n$ and the multi-welded braid group $M_kWB_n$. We examined key properties of these representations, such as \textbf{faithfulness} and \textbf{irreducibility}, which are fundamental to understanding their algebraic and geometric structures. In addition, we have constructed non-local representations of $M_2WB_3$, making a path towards finding more such representations in future work.

\vspace{0.1cm}

This study also opens up several interesting directions for future research. One such direction is the classification of complex homogeneous $2$-local representations of the \textbf{multi-unrestricted braid group} $M_kUB_n$. Key open problems include determining how many such representations exist, what their structural characteristics are, and whether they satisfy desirable properties such as faithfulness and irreducibility.

\vspace{0.1cm}

These questions highlight the richness of the subject and the potential for further exploration in the field of braid group representations.

\section{Acknowledgment}
The first author acknowledges the support of the University Grants Commission
(UGC), India, for a research fellowship with NTA Ref. No.231610035955 . The cor-
responding author acknowledges the support of the Anusandhan National Research
Foundation (ANRF) with sanction order no. CRG/2023/004921.


\vspace{0.2cm}

\end{document}